\numberwithin{equation}{section} 
\newtheorem{theorem}{Theorem}[section]
\newtheorem{lemma}[theorem]{Lemma}
\newtheorem{remark}[theorem]{Remark}
\newtheorem{proposition}[theorem]{Proposition}
\newcommand{\N}{\mathbb{N}}
\newcommand{\R}{\mathbb{R}}
\newcommand{\Lin}{\mathscr{L}}
\newcommand{\PP}{\mathscr{P}}
\renewcommand{\>}{\rangle}
\renewcommand{\a}{\alpha}
\newcommand{\e}{\varepsilon}
\newcommand{\g}{\gamma}
\renewcommand{\l}{\lambda}
\renewcommand{\S}{\mathbb{S}}
\newcommand{\p}{\partial}
\newcommand{\Id}{\operatorname{Id}}
 \newcommand{\tauV}{{\kern-3pt\tau}}
 \newcommand{\oVVVk}{\overline{\mbox{\boldmath$V$}}\kern-3pt}
 \newcommand{\tVVVk}{\tilde{\mbox{\boldmath$V$}}\kern-3pt}
 \newcommand{\negint}{{\int\negthickspace\negthickspace\negthickspace\negthickspace-}}
\begin{document}

\title[Free boundary problems for fully nonlinear elliptic equations]{A general class of free boundary problems\\ for fully nonlinear elliptic equations}

\author[Alessio Figalli]{Alessio Figalli}
\address{Mathematics Department, The University of Texas at Austin,  Austin, Texas, 78712-1202, USA}
\email{figalli@math.utexas.edu}

\author[Henrik Shahgholian ]{Henrik Shahgholian}
\address{Department of Mathematics, KTH Royal Institute of Technology, 100~44  Stockholm, Sweden}
\email{henriksh@kth.se}

\thanks{A. Figalli was partially supported by NSF grant DMS-0969962.
H. Shahgholian was partially supported by Swedish Research Council. 
A. Figalli acknowledges the G\"oran Gustafsson Foundation for his visiting appointment to KTH}

\begin{abstract}    
In this paper we study the fully nonlinear free boundary problem 
$$
\left\{
\begin{array}{ll}
F(D^2u)=1 & \text{a.e. in }B_1 \cap \Omega\\
|D^2 u| \leq K & \text{a.e. in }B_1\setminus\Omega ,
\end{array}
\right.
$$
where $K>0$, and $\Omega$ is an unknown open set.

Our main result is the optimal regularity for solutions to this problem:
namely, we prove that $W^{2,n}$ solutions are locally $C^{1,1}$ inside $B_1$.
Under the extra condition that $\Omega \supset \{D u\neq 0 \}$, and  a uniform thickness assumption on the coincidence set $\{D u =  0 \}$,
we also show local regularity for the free boundary $\partial\Omega\cap B_1$.
\end{abstract}

\maketitle


\section{Introduction and main result}

\subsection{Background}
Since the seminal work of Luis A. Caffarelli \cite{Caff} on the analysis of free boundaries in the obstacle problem, many new techniques and tools have been developed to treat similar type of free boundary problems. The linear theory, i.e., when the operator  is the Laplacian, has been completely resolved in \cite{CKS,Sh} for Lipschitz right hand side $f$ and 
when the equation is satisfied outside the set where $u$ vanishes (this correspond to the obstacle problem):
\begin{equation}
\Delta u=f\chi_{\{u\neq 0\}} \quad \text{in }B_1 .
\end{equation}

Passing below the Lipschitz threshold was a challenging task, as the previous techniques were using monotonicity formulas which failed when $f \in C^\alpha$. The main difficulty has been to prove the $C^{1,1}$-regularity of solutions. On the other hand the regularity of the free boundary for the Laplacian case was still feasible (even in low-regularity cases) due to the fact  that after blow-up the right hand side becomes a constant, and hence the monotonicity tool applies again. (We refer to the above reference for more details.) 

A generalization of the problem towards fully nonlinear operator $F(D^2 u)=\chi_{\{u\neq 0\}}$ 
 for the signed-problem (i.e., $u\geq 0$) was completely done by K. Lee \cite{Lee} and later 
partial results were obtained by Lee-Shahgholian in the case of no-sign obstacle problem \cite{LS}. Here, two challenging problems were left: 
(i) $C^{1,1}$-regularity of $u$; (ii) Classification of global solutions.

Recently, using harmonic analysis technique,  Andersson-Lindgren-Shahgholian \cite{ALS} could prove a complete result for the Laplacian case, with $f$ satisfying a Dini-condition. Actually their argument shows that if the elliptic equation $\Delta v=f$ admits a $C^{1,1}$-solution in $B_1$, then the corresponding free boundary problem also admits a $C^{1,1}$-solution. From here, the free boundary regularity follows as in the classical case. The heart of the matter in \cite{ALS} lies in their Proposition 1 (due to John Andersson) which is a dichotomy between the growth of the solution and the decay of the volume of the coincidence set. Indeed, one can show that if (close to a free boundary point)  the growth of the solution  is not quadratic,
then the volume of the complement set $B_r(x^0)\setminus \Omega$ decays fast enough to make the potential of this set twice differentiable at the origin. From this fact, they can then achieve the optimal growth.

In \cite{ALS} the authors strongly relied on the linearity of the equation to consider projections
of the solution onto the space of second order harmonic polynomials. 
Also, the linearity of the equation plays a crucial role in several of their estimates.
Here, we introduce a suitable ``fully nonlinear version'' of this projection
operation, and we are able to circumvent the difficulties coming from the nonlinear structure of the problem to prove  $C^{1,1}$ regularity of the solution. 
Using this result, we can also show $C^1$-regularity of the free boundary under a uniform thickness assumptions on the ``coincidence set'', which proves in particular that Lipschitz free boundaries are smooth.
Nevertheless, a complete regularity of the free boundary  still remains open due to  lack of new technique to classify global solutions.

\subsection{Setting of the problem}
Our aim here is to provide an optimal regularity result for solutions to a very general class of free boundary problems which include both the obstacle problem (i.e., the right hand side is given
by $\chi_{\{u\neq 0\}}$) and the more general free boundary problems studied in \cite{CS}
(where the right hand side is of the form $\chi_{\{\nabla u \neq 0\}}$).

To include these examples in a unique general framework,
we make the weakest possible assumption on the structure of the equation:
we suppose that $u$ solves a fully nonlinear equation
inside an open set $\Omega$, and in the complement of $\Omega$ we only assume that 
$D^2u$ is bounded.

Notice that, in the above mentioned problems, the first step in the regularity theory is to show that viscosity solutions are $W^{2,p}$ for any $p<\infty$ (this is a relatively ``soft'' part), and then one wants to prove
that actually solutions are $C^{1,1}$.

Since the first step is already pretty well understood \cite{F,CS,PSU}, 
here we focus on the second one. Hence, we assume  that 
$u:B_1 \to \R$ is a $W^{2,n}$ function satisfying 
\begin{equation}
\label{eq:obstacle}
\left\{
\begin{array}{ll}
F(D^2u)=1 & \text{a.e. in }B_1 \cap \Omega\\
|D^2 u| \leq K & \text{a.e. in }B_1\setminus\Omega ,
\end{array}
\right.
\end{equation}
where $K>0$, and $\Omega \subset \R^n$ is some unknown open set.
Since $D^2u$ is bounded in the complement of $\Omega$, we see that $F(D^2u)$
is bounded inside the whole $B_1$, therefore $u$ is a so-called ``strong $L^n$ solution''
to a fully nonlinear equation with bounded right hand side \cite{CCKS}.
We refer to \cite{CC} as a basic reference to fully nonlinear equations and viscosity methods, and to \cite{F,CS,PSU} for several existence results for strong solutions to free boundary type problems.

Let us observe that, if $u \in W^{2,n}$, then $D^2u=0$ a.e. inside both sets $\{u=0\}$ and $\{\nabla u=0\}$,
so \eqref{eq:obstacle} includes as special cases both $F(D^2u)=\chi_{\{u \neq 0\}}$
and $F(D^2u)=\chi_{\{\nabla u \neq 0\}}$.

We assume that:
\begin{enumerate}
\item[(H0)] $F(0)=0$.
\item[(H1)] $F$ is uniformly elliptic with ellipticity constants $\l_0,\lambda_1>0$, that is
$$
\PP^-(Q-P)
 \leq F(Q) - F(P) \leq \PP^+(Q-P)
$$
for any $P,Q$ symmetric, where $\PP^-$ and $\PP^+$ are the extremal Pucci operators:
$$
\PP^-(M):=\inf_{\lambda_0 \Id \leq N \leq \lambda_1 \Id} {\rm trace}(NM),\qquad \PP^+(M):=\sup_{\l_0 \Id \leq N \leq \l_1 \Id} {\rm trace}(NM).
$$
\item[(H2)] $F$ is either convex or concave. 
\end{enumerate}

Under assumptions (H0)-(H2) above, strong $L^n$ solutions are also viscosity solutions \cite{CCKS},
so classical regularity results for fully nonlinear equations \cite{CaFNL}
show that $u \in W^{2,p}_{\rm loc}(B_1)$ for all $p <\infty$.
In addition, by \cite{CH}, $D^2u$  belongs to BMO.

Our primary aim here is to prove uniform optimal $C^{1,1}$-regularity for $u$.
This is a key step in order to be able to perform an analysis of the  free boundary.

\begin{remark}
In order to keep the presentation simple and to highlight the main ideas in the proof,
 we decided to restrict ourselves to the ``clean'' case $F(D^2u)= 1$ inside $\Omega$. However, under suitable regularity assumptions on $F$ and $f$, we expect our arguments  to work for the general class of equations
    $F(x,u,\nabla u, D^2 u)=f$ inside $\Omega$.
\end{remark}

\subsection{Main results}
Our main result in this paper concerns optimal regularity of solutions to \eqref{eq:obstacle}.

\begin{theorem} (Interior $C^{1,1}$ regularity)
\label{thm:C11}
Let $u:B_1 \to \R$ be a $W^{2,n}$ solution of \eqref{eq:obstacle}, and
assume that $F$ satisfies (H0)-(H2).
Then there exists a universal constant $\bar C>0$ such that
$$
|D^2u| \leq \bar C, \qquad \hbox{in } B_{1/2}.
$$
\end{theorem}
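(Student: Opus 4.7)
\medskip
\noindent\emph{Proof plan.} The strategy is to argue by contradiction via a blow-up/compactness scheme in the spirit of Andersson--Lindgren--Shahgholian \cite{ALS}, but with their linear projection onto harmonic quadratic polynomials replaced by a nonlinear projection onto
$$\mathcal{Q} := \{Q \text{ quadratic polynomial} : F(D^2 Q)=1\}.$$
Under (H1)--(H2), $\mathcal{Q}$ is a smooth codimension-one hypersurface in the space of symmetric matrices. For each $x_0 \in B_{1/2}$ and $r \in (0,1/4)$, I would let $(Q_r^{x_0}, L_r^{x_0})$ minimize $\int_{B_r(x_0)}(u-Q-L)^2\,dx$ over $Q \in \mathcal{Q}$ and $L$ affine, and define the growth quantity
$$S(x_0,r) := \sup_{s\in[r,1/4]} \frac{\|u - Q_s^{x_0} - L_s^{x_0}\|_{L^\infty(B_s(x_0))}}{s^2}.$$
Since $|D^2 Q_r^{x_0}|$ is universally bounded via (H1), Theorem \ref{thm:C11} reduces to proving a universal bound on $S$.

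If the bound fails, one extracts $(x_k, r_k)$ with $x_k \to x_\infty \in \overline{B_{1/2}}$, $r_k \to 0$, $\varepsilon_k := S(x_k, r_k)^{-1} \to 0$, and by a standard doubling/maximality selection one arranges that the rescaled blow-ups
$$v_k(y) := \varepsilon_k r_k^{-2}\bigl[u(x_k+r_k y)-Q_{r_k}^{x_k}(x_k+r_k y)-L_{r_k}^{x_k}(x_k+r_k y)\bigr]$$
satisfy $\|v_k\|_{L^\infty(B_1)}\le 1$, $\|v_k\|_{L^\infty(B_{1/2})}\ge c_0>0$, and $|v_k(y)|\lesssim 1+|y|^2$ up to scale $\sim 1/r_k$. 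Writing $M_k:=D^2Q_{r_k}^{x_k}$, the function $v_k$ solves $F(M_k+\varepsilon_k D^2 v_k)=1$ on the rescaled set $\Omega_k := r_k^{-1}(\Omega - x_k)$ and $|D^2 v_k|\le \varepsilon_k K \to 0$ on its complement. Thanks to the $W^{2,p}$ estimates for strong $L^n$ solutions recalled right after (H2) and Caffarelli's interior $C^{2,\alpha}$ theory for convex/concave $F$, one gets compactness $v_k \to v_\infty$ locally uniformly on $\R^n$; the limit has quadratic growth and satisfies the linearized equation $F'(M_\infty):D^2 v_\infty = 0$ on all of $\R^n$, because off $\lim \Omega_k$ the Hessian of $v_\infty$ vanishes outright, and on $\lim \Omega_k$ the equation follows from first-order Taylor expansion of $F$ at $M_\infty$.

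A Liouville-type theorem for uniformly elliptic linear operators then forces $v_\infty$ to be at most a quadratic polynomial. On the other hand, the minimality of the projection yields $L^2(B_1)$-orthogonality of $v_k$ to affine functions and to the tangent space $T_{M_k}\mathcal{Q} = \ker F'(M_k)$ (viewed as a space of quadratic polynomials via $P\mapsto \tfrac12\langle Py,y\rangle$); this orthogonality passes to the limit and makes $v_\infty$ orthogonal to affines and to $\ker F'(M_\infty)$. But the linearized PDE says $D^2 v_\infty \in \ker F'(M_\infty)$, so $v_\infty$ is itself a quadratic lying in that kernel. Being in the kernel while $L^2(B_1)$-orthogonal to it forces $v_\infty\equiv 0$, contradicting $\|v_\infty\|_{L^\infty(B_{1/2})}\ge c_0$.

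The principal obstacle is purely nonlinear in origin: $\mathcal{Q}$ is curved, the projection map is only implicit, and the linearized operator $F'(M_r^{x_0})$ depends on $(x_0,r)$. The delicate points are showing that $M_r^{x_0}$ stays bounded and varies stably as $(x_0,r)\to(x_\infty,0)$, that $T_{M_r^{x_0}}\mathcal{Q}$ converges to $T_{M_\infty}\mathcal{Q}$, and that the limiting operator remains uniformly elliptic with constants in $[\lambda_0,\lambda_1]$. This is exactly where (H2) enters, furnishing both the smoothness of $\mathcal{Q}$ needed to define and stabilize the projection, and the $C^{2,\alpha}$ estimates needed for blow-up compactness.
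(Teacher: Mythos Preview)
Your scheme has a genuine gap at the linearization step, and it is not merely a matter of care with (H2).

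With your normalization $v_k(y)=\varepsilon_k r_k^{-2}\bigl[u(x_k+r_ky)-Q_{r_k}^{x_k}-L_{r_k}^{x_k}\bigr]$ and $\varepsilon_k=S(x_k,r_k)^{-1}\to 0$, one has $D^2u(x_k+r_ky)=M_k+\varepsilon_k^{-1}D^2v_k(y)$, so on $\Omega_k$ the equation reads
\[
F\bigl(M_k+\varepsilon_k^{-1}D^2v_k\bigr)=1=F(M_k),
\]
not $F(M_k+\varepsilon_k D^2v_k)=1$ as you wrote. The perturbation $\varepsilon_k^{-1}D^2v_k$ is \emph{large}, so a first-order Taylor expansion of $F$ at $M_\infty$ does not produce $F'(M_\infty):D^2v_\infty=0$. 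What survives is only that $v_k$ lies in the Pucci class $\mathcal P^-(D^2v_k)\le 0\le \mathcal P^+(D^2v_k)$ on $\Omega_k$; if one passes to a limit of the rescaled operators $\tilde G_k(P):=\varepsilon_k\bigl[F(M_k+\varepsilon_k^{-1}P)-F(M_k)\bigr]$, the limiting operator is a recession-type object of $F$ at infinity, \emph{not} the linearization $F'(M_\infty)$. Hence there is no reason for $D^2v_\infty$ to lie in $\ker F'(M_\infty)$, and your orthogonality-to-the-tangent-space argument does not close. In the Laplacian case of \cite{ALS} this step works precisely because the equation is linear (hence invariant under the normalization $v_k=\varepsilon_k w_k$); that invariance is exactly what is lost here.

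A second, related issue: the claim that $|D^2Q_r^{x_0}|$ is universally bounded ``via (H1)'' is false --- the level set $F^{-1}(1)$ is a noncompact hypersurface (already for the Laplacian, $\{\mathrm{tr}\,M=1\}$ is unbounded), and neither (H1) nor (H2) prevents the best $L^2$ approximant from drifting to infinity. You acknowledge this as ``delicate'' at the end, but it is not resolved by convexity/concavity alone; in fact, the possible unboundedness of the projection is the heart of the problem.

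The paper avoids both obstacles by abandoning the blow-up/compactness route altogether. It works pointwise and directly: using the BMO estimate of Caffarelli--Huang it produces matrices $P_r\in F^{-1}(1)$ with $\int_{B_r}|D^2u-P_r|^2\le Cr^n$, and then proves a dichotomy (Proposition~\ref{prop:M}): if $|P_r|$ is large, the coincidence set $A_r=(B_r\setminus\Omega)/r$ must decay geometrically, $|A_{r/2}|\le 2^{-n}|A_r|$. The proof of that step does not linearize $F$; it writes $u_r=\tfrac12\langle P_ry,y\rangle+v_r+w_r$ with $v_r$ solving $F(P_r+D^2v_r)=1$ (controlled by Evans--Krylov) and $w_r$ satisfying a Pucci inequality with right-hand side $\lesssim\chi_{A_r}$ (controlled by ABP and $W^{2,2n}$), and then uses $|D^2u_r|\le K$ on $A_r$ to force $|A_r\cap B_{1/2}|\,|P_r|^{2n}\lesssim |A_r|$. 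The final bound on $D^2u(x^0)$ then comes from Caffarelli's pointwise $C^{2,\alpha}$ estimate once the right-hand side decays fast enough. No passage to a limiting linear equation is ever needed.
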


In order to investigate the regularity of the free boundary,
we need to restrict ourselves to a more specific situation than the one in \eqref{eq:obstacle}.
Indeed, as discussed in Section~\ref{sect:non deg},
even if we assume that $D^2u=0$ outside $\Omega$,
non-degeneracy of solutions (a key ingredient to study the regularity of the free boundary)
may fail.
As we will see, a sufficient condition to show non-degeneracy of solutions
is to assume that $\Omega\supset \{\nabla u\neq 0\}$.
Still, once non-degeneracy is proved, the 
lack of strong tools (available in the Laplacian case) such as monotonicity formulas
makes the regularity of the free boundary a very challenging issue.

To state our result we need to introduce the concept of minimal diameter.
Set $\Lambda:=B_1\setminus \Omega$, and for any set $E$ let
$\operatorname{MD}(E)$ denote  the smallest possible distance between two parallel hyperplanes containing $E$. Then,
we define the minimal diameter of the set $\Lambda$ in $B_r(x)$ as
$$
\delta_r(u,x):=\frac{\operatorname{MD}(\Lambda \cap B_r(x))}{r}.
$$
We notice that $\delta_r$ enjoys the scaling property
$\delta_1(u_r,0)= \delta_r(u,x)$, where $u_r(y)=u(x+ry)/r^2$. 

Our result provides regularity for the free boundary under a uniform thickness condition.
As a corollary of our result, we deduce that Lipschitz free boundaries
are $C^1$, and hence smooth \cite{KN}.

\begin{theorem} (Free boundary regularity)\label{thm:C12}
Let $u:B_1 \to \R$ be a $W^{2,n}$ solution of \eqref{eq:obstacle}. 
Assume that $F$ is convex and satisfies (H0)-(H1), and that 
one of the following conditions holds:
\begin{enumerate}
\item[-]  $\Omega\supset\{\nabla u \neq 0\}$ and $F$ is of class $C^1$;
\item[-] $\Omega\supset\{u \neq 0\}$.
\end{enumerate}
Suppose further that there exists $\e>0$ such that
$$
\delta_r(u,z) > \e
\qquad \forall \,r<1/4,\,z\in \partial \Omega \cap B_{r}(0).
$$ 
Then 
$\partial \Omega \cap B_{r_0}(0)$  is  a $C^1$-graph, where $r_0$
depends only on $\e$ and the data.
\end{theorem}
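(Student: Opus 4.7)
The plan is to adapt Caffarelli's classical flatness/directional-monotonicity strategy for the obstacle problem to the fully nonlinear setting, using convexity of $F$ in place of linearity. The $C^{1,1}$ estimate of Theorem~\ref{thm:C11} provides compactness of blow-ups, and the thickness hypothesis together with the extra assumption ($\Omega\supset\{\nabla u\neq 0\}$ or $\Omega\supset\{u\neq 0\}$) should force $u$ to be close to a half-space solution on small scales; a directional-monotonicity argument will then yield Lipschitz regularity of $\partial\Omega$, which combined with continuous dependence of the blow-up direction upgrades to $C^1$.

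First I would establish quadratic non-degeneracy at free boundary points: $\sup_{B_r(x_0)}|u|\ge c\,r^2$. In the case $\Omega\supset\{u\neq 0\}$ this is the classical no-sign obstacle estimate, obtained by comparing $u$ with a translated paraboloid and invoking $F(D^2u)=1$ via the Alexandrov-Bakelman-Pucci principle. In the case $\Omega\supset\{\nabla u\neq 0\}$ with $F\in C^1$, I would differentiate the equation: each $\partial_i u$ solves a linear uniformly elliptic equation in $\{\nabla u\neq 0\}$ and vanishes on its complement, so the analogous comparison can be run on the gradient, in the spirit of \cite{CS}.

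At a free boundary point $x_0\in\partial\Omega\cap B_{1/4}(0)$, I would consider the rescalings $u_r(y):=r^{-2}u(x_0+ry)$, which by Theorem~\ref{thm:C11} are uniformly $W^{2,\infty}_{\rm loc}$; along any subsequence $r_k\downarrow 0$ they converge in $C^{1,\alpha}_{\rm loc}$ to a global solution $u_0:\R^n\to\R$ of the same problem, inheriting the quadratic non-degeneracy. The thickness hypothesis is scale invariant and preserved under Hausdorff limits, so $\Lambda_0:=\R^n\setminus\Omega_0$ has minimal diameter bounded below on $B_1$. I would then exploit convexity of $F$ to prove that $u_0$ is convex: differentiating the equation twice in a direction $e$ gives, thanks to convexity, $\PP^+(D^2(\partial_{ee}u_0))\ge 0$ in $\Omega_0$, while $\partial_{ee}u_0\ge -K$ on $\Lambda_0$; a Pucci maximum principle, combined with non-degeneracy and the minimal-diameter lower bound, should rule out a strictly negative minimum of $\partial_{ee}u_0$ and yield convexity. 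Once $u_0$ is convex, $\Lambda_0$ is convex of positive thickness, and non-degeneracy then identifies $u_0$, up to normalization, with the one-dimensional half-space solution $\frac{\alpha}{2}((y\cdot e)_+)^2$, where $\alpha>0$ is determined by $F(\alpha\,e\otimes e)=1$.

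Armed with this classification, a compactness/closeness argument shows that at every free boundary point near $0$ and on small enough scales, $u$ is uniformly close to such a half-space profile. The directional-monotonicity technique---propagating positivity of $\partial_\tau u$ in a cone of directions $\tau$ around $e$ via a Pucci/linearized maximum-principle comparison with $c\,\partial_e u$---then yields that $\partial\Omega$ is a Lipschitz graph in a neighborhood of $0$ with constants depending only on $\e$ and the data, and uniqueness of the blow-up makes the inward normal $e(x_0)$ depend continuously on $x_0$, upgrading Lipschitz to $C^1$ (the final promotion to smoothness being the content of \cite{KN}). The main obstacle is the classification of blow-ups: without Alt-Caffarelli-Friedman or Weiss monotonicity formulas, convexity of $u_0$ must be squeezed out of the convexity of $F$ alone through a delicate Pucci argument on the negative part of $\partial_{ee}u_0$; in addition, since the thickness hypothesis is stated pointwise along $\partial\Omega$, care must be taken to ensure it transfers to every blow-up at every free boundary point near $0$, not only at $0$ itself.
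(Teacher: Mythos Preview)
Your overall architecture matches the paper's exactly: non-degeneracy, blow-up to a global solution, classification of global solutions as half-space profiles via convexity of $F$, then directional monotonicity to get Lipschitz and finally $C^1$. The substantive gap is in the classification step, where you write that ``a Pucci maximum principle, combined with non-degeneracy and the minimal-diameter lower bound, should rule out a strictly negative minimum of $\partial_{ee}u_0$.'' First a sign slip: convexity of $F$ makes $\partial_{ee}u_0$ a \emph{supersolution} of the linearized operator $F_{ij}(D^2u_0)\partial_{ij}$ (equivalently $\PP^-(D^2\partial_{ee}u_0)\le 0$), not a subsolution, so it is the strong \emph{minimum} principle that is in play. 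More importantly, $u_0$ is defined on all of $\R^n$, so the infimum $m:=\inf_{z\in\Omega_0,\,e\in\mathbb S^{n-1}}\partial_{ee}u_0(z)$ need not be attained and no maximum/minimum principle alone forbids $m<0$.

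The paper's device, which your outline is missing, is a \emph{secondary rescaling}: pick $y^j\in\Omega_0$ and $e^j\to e_1$ with $\partial_{e^je^j}u_0(y^j)\to m$, rescale $u_0$ around $y^j$ at the scale $d_j=\operatorname{dist}(y^j,\partial\Omega_0)$, and pass to a new global limit $u_\infty$ in which $\partial_{11}u_\infty(0)=m$ is actually achieved at a point of distance $1$ from $\partial\Omega_\infty$. The strong minimum principle now forces $\partial_{11}u_\infty\equiv m$ on the connected component $\Omega_\infty\ni 0$; and because the infimum was taken over \emph{all} directions simultaneously, $e_1$ is an eigenvector of $D^2u_\infty$ everywhere there, so $\partial_{1j}u_\infty\equiv 0$ for $j\ge 2$. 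Integrating yields $u_\infty=-\tfrac m2 x_1^2+ax_1+b(x')$ on $\Omega_\infty$, and one then checks this forces $u_\infty$ to be a global second-order polynomial, whose coincidence set $\{\nabla P_2=0\}$ lies in a hyperplane and hence has $\delta_r\equiv 0$, contradicting the thickness hypothesis. This double blow-up, together with taking the infimum over directions (so the limiting touching direction becomes an eigenvector), is precisely the missing mechanism; it is also the reason the theorem assumes thickness uniformly at \emph{every} $z\in\partial\Omega\cap B_r(0)$ rather than only at the origin, since the secondary rescaling recenters at an a~priori uncontrolled free-boundary point of $u_0$ and thickness must survive there.
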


The important difference between this theorem and previous results of this form
is that here we assume thickness of $\Lambda$  in a  uniform neighborhood of the origin rather than  at 
the origin only.
The reason for this fact is that this allows us to classify global solutions arising as
blow-ups around such ``thick points''. Once this is done, then local regularity follows in pretty standard way.

The paper is organized as follows:
In Section~\ref{regularity} we prove Theorem~\ref{thm:C11}.
Then in Section~\ref{sect:non deg} we investigate the non-degeneracy of solutions, and classify global solutions under a suitable thickness assumption. In  Section~\ref{dir-mon} we show directional monotonicity for local solutions, that gives a Lipschitz regularity for the free boundary.
This Lipschitz regularity can then be improved to $C^1$.
The details of such an analysis are by-now classical and only indicated shortly in Section~\ref{Local-regularity}.


\section{Proof of Theorem~\ref{thm:C11}}\label{regularity}

\subsection{Technical preliminaries}\label{technicality}

In this section we shall gather some technical tools that are interesting in their own rights, and may even be applied to
other problems. Throughout all the section, we assume that  $F$ satisfies (H0)-(H2).

With no loss of generality, here we will perform all our estimates at the origin, and later on we will apply such estimates at all points where $u$ is twice differentiable, showing that $D^2u$
is universally bounded at all such points. This will give a complete optimal regularity for $u$; see Section~\ref{main-proof}.

For all $r<1/4$, we define
\begin{equation}
\label{eq:Ar}
A_r:=\{x: \ rx \in B_r\setminus \Omega\}=
\frac{B_r \setminus \Omega}{r}\subset B_1.
\end{equation}

We recall that, by \cite[Theorem A]{CH},
$$
\|D^2u\|_{BMO(B_{3/4})} \leq C,
$$
which implies in particular that
\begin{equation}
\label{eq:BMO 0}
\sup_{r\in (0,1/4)} \negint_{B_r(0)} |D^2 u(y) -(D^2u)_{r,0}|^2\,dy \leq C,\qquad (D^2u)_{r,0}:=\negint_{B_r(0)} D^2 u(y)\,dy.
\end{equation}

Here we first show that in \eqref{eq:BMO 0} we can replace $(D^2u)_{r,0}$
with a matrix in $F^{-1}(1)$.

\begin{lemma}
\label{lemma:Pr}
There exists $C>0$ universal such that
\begin{equation}
\label{eq:BMO 1}
\min_{F(P)=1}\negint_{B_r(0)} |D^2 u(y) -P|^2\,dy \leq C \qquad \forall\,r \in (0,1/4).
\end{equation}
\end{lemma}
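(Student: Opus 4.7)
The plan is to relate the constrained minimum over $F^{-1}(1)$ to the unconstrained $L^2$-minimizer $\bar P_r:=(D^2u)_{r,0}$, whose excess is already controlled by \eqref{eq:BMO 0}. For any $P\in F^{-1}(1)$, the triangle inequality in $L^2$ gives
$$
\negint_{B_r}|D^2u-P|^2\,dy \leq 2\negint_{B_r}|D^2u-\bar P_r|^2\,dy + 2|P-\bar P_r|^2,
$$
so the lemma reduces to producing a single $P^*\in F^{-1}(1)$ with $|P^*-\bar P_r|\leq C$, i.e.\ to bounding $\operatorname{dist}\bigl(\bar P_r,F^{-1}(1)\bigr)$. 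I will get this from a ``projection along $\Id$'' trick: by uniform ellipticity (H1), the map $t\mapsto F(M+t\Id)$ is strictly monotone with derivative in $[\l_0 n,\l_1 n]$, so for every symmetric $M$ there is a unique $t$ with $F(M+t\Id)=1$, and $|t|\leq |F(M)-1|/(\l_0 n)$. Hence
$$
\operatorname{dist}\bigl(M,F^{-1}(1)\bigr)\leq C\,|F(M)-1|,
$$
and the same (H1) makes $F$ globally Lipschitz with constant $\l_1 n$, with $|F(M)|\leq \l_1 n |M|$ from (H0).

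Thus everything reduces to proving the universal bound $|F(\bar P_r)-1|\leq C$. I would prove this by a dichotomy on which side of \eqref{eq:obstacle} dominates in $B_r$. If $|B_r\cap\Omega|\geq \tfrac12|B_r|$, the pointwise identity $F(D^2u)=1$ on $\Omega$ combined with the Lipschitz bound on $F$ gives
$$
|F(\bar P_r)-1|^2\,|B_r\cap\Omega| = \int_{B_r\cap\Omega}\bigl|F(\bar P_r)-F(D^2u(y))\bigr|^2\,dy \leq (\l_1 n)^2 \int_{B_r}|\bar P_r-D^2u|^2\,dy,
$$
which is $\leq C|B_r|$ by \eqref{eq:BMO 0}, and one divides by the half-measure. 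If instead $|B_r\setminus\Omega|\geq \tfrac12|B_r|$, the pointwise bound $|D^2u|\leq K$ on the complement, together with triangle inequality and Cauchy--Schwarz applied on $B_r\setminus \Omega$,
$$
|\bar P_r|\cdot|B_r\setminus\Omega|\leq K|B_r\setminus\Omega| + |B_r\setminus\Omega|^{1/2}\Bigl(\int_{B_r}|\bar P_r-D^2u|^2\,dy\Bigr)^{1/2},
$$
and \eqref{eq:BMO 0} force $|\bar P_r|\leq C$, hence $|F(\bar P_r)|\leq \l_1 n|\bar P_r|\leq C$.

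No step here is genuinely an obstacle: the heart of the matter is the measure dichotomy, which ensures that one of the two regimes of \eqref{eq:obstacle} always governs a set of comparable measure to $B_r$. It is worth noting that convexity/concavity (H2) is not used at all in this lemma; (H0) and (H1) alone suffice.
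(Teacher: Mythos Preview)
Your argument is correct and follows the same overall strategy as the paper: bound $|F(\bar P_r)-1|$ universally, then shift $\bar P_r$ by a bounded multiple of $\Id$ to land on $F^{-1}(1)$. The only difference is in how you bound $|F(\bar P_r)-1|$. You split into two cases according to which regime of \eqref{eq:obstacle} occupies at least half of $B_r$; the paper avoids this entirely by noting that $F(D^2u)\in L^\infty(B_1)$ holds \emph{globally} (it equals $1$ on $\Omega$ and is bounded by Lipschitzness of $F$ and $|D^2u|\le K$ off $\Omega$), so one can simply average the pointwise inequality $|F(\bar P_r)|\le |F(D^2u(y))|+\lambda_1|\bar P_r-D^2u(y)|$ over all of $B_r$ and apply \eqref{eq:BMO 0}. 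Your dichotomy is thus an unnecessary detour rather than ``the heart of the matter''; the real content is the projection-along-$\Id$ step, which you and the paper handle identically. Your remark that (H2) is not used here is accurate.
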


\begin{proof}
Set $Q_r:=(D^2u)_{r,0}$. Since
$F(D^2u)$ is bounded inside $B_1$ and
$F$ is $\lambda_1$-Lipschitz (this is a consequence of (H1)),
using \eqref{eq:BMO 0} we get
\begin{align*}
|F(Q_r)|&=\left|\negint_{B_r(0)}F\left(Q_r-D^2u(y)+D^2u(y)\right) \,dy \right|\\
&\leq \negint_{B_r(0)}\left(\left|F(D^2u(y))\right| +\lambda_1 \left|Q_r-D^2u(y)\right| \right)\,dy\\
&\leq 1+ \sqrt{\negint_{B_r(0)} |D^2 u(y) -(D^2u)_{r,0}|^2\,dy} \leq C.
\end{align*}
Thus we have proved that $F(Q_r) $ is universally bounded.
By ellipticity and continuity (see (H1)) we easily deduce that there exists a universally bounded
constant $\beta \in \R$ such that
$F(Q_r+\beta \Id)=1$.
Since
$$
\negint_{B_r(0)} |D^2 u(y) -(Q_r+\beta \Id)|^2\,dy \leq 2\negint_{B_r(0)} |D^2 u(y) -Q_r|^2\,dy + 2 \beta^2,
$$
this proves the result.
\end{proof}

For any $r \in (0,1/4)$,
let $P_r \in F^{-1}(1)$ denote a minimizer in \eqref{eq:BMO 1} (although $P_r$ may not be unique,
we just choose one).

We first show that $P_r$ cannot change too much on a dyadic scale:
\begin{lemma}
\label{lem:dyadic}
There exists a universal constant $C_0$ such that
$$
|P_{2r}-P_r| \leq C_0 \qquad \forall\,r \in (0,1/8).
$$ 
\end{lemma}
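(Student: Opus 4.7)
The plan is to exploit the doubling of $B_r$ inside $B_{2r}$ together with the uniform $L^2$ bound from Lemma~\ref{lemma:Pr}, without ever using the constraint $F(P_r)=F(P_{2r})=1$ — the bound is purely a BMO-type consequence of the fact that both $P_r$ and $P_{2r}$ lie within a universally bounded $L^2$ distance of $D^2u$ on their respective balls.

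First, since $P_{2r}-P_r$ is a constant matrix, I can write
\begin{equation*}
|P_{2r}-P_r|^2 \;=\; \negint_{B_r(0)}|P_{2r}-P_r|^2\,dy.
\end{equation*}
Applying the elementary inequality $|a-b|^2\leq 2|a-c|^2+2|c-b|^2$ with $c=D^2u(y)$, I get
\begin{equation*}
|P_{2r}-P_r|^2 \;\leq\; 2\negint_{B_r(0)}|D^2u(y)-P_{2r}|^2\,dy \;+\; 2\negint_{B_r(0)}|D^2u(y)-P_r|^2\,dy.
\end{equation*}

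The second term is bounded by $2C$ directly from Lemma~\ref{lemma:Pr} applied at radius $r$. For the first term, I use $B_r(0)\subset B_{2r}(0)$ and $|B_{2r}|/|B_r|=2^n$ to upgrade the average on $B_r$ to one on $B_{2r}$ at the cost of a factor $2^n$:
\begin{equation*}
\negint_{B_r(0)}|D^2u(y)-P_{2r}|^2\,dy \;\leq\; 2^n\negint_{B_{2r}(0)}|D^2u(y)-P_{2r}|^2\,dy \;\leq\; 2^n C,
\end{equation*}
where the last inequality is Lemma~\ref{lemma:Pr} applied at radius $2r$ (which requires $2r<1/4$, i.e.\ $r<1/8$, explaining the stated range). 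Combining the two estimates gives $|P_{2r}-P_r|^2\leq 2(2^n+1)C$, so $C_0:=\sqrt{2(2^n+1)C}$ works.

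There is really no main obstacle here: the lemma is a soft consequence of Lemma~\ref{lemma:Pr} and the doubling property, and the nonlinear structure of $F$ plays no role. The content of the statement will only be used later, when one iterates the dyadic comparison to control $P_r$ across many scales (where one does need to invoke $F(P_r)=1$ and ellipticity to prevent $P_r$ from drifting to infinity).
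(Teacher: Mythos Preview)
Your proof is correct and follows essentially the same route as the paper: split $|P_{2r}-P_r|^2$ via the triangle inequality against $D^2u$ on $B_r$, bound the $P_r$-term directly by Lemma~\ref{lemma:Pr}, and enlarge the $P_{2r}$-term to $B_{2r}$ at the cost of a factor $2^n$ before applying Lemma~\ref{lemma:Pr} at radius $2r$. The paper's write-up is slightly terser but the argument is identical.
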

\begin{proof}
By the estimate
$$
\negint_{B_r(0)} |D^2 u(y) -P_r|^2\,dy+\negint_{B_{2r}(0)} |D^2 u(y) -P_{2r}|^2\,dy \leq C
$$
(see \eqref{eq:BMO 1}), we obtain
\begin{align*}
|P_{2r} - P_r|^2 &\leq 2
\negint_{B_r(0)} |D^2 u(y) -P_r|^2\,dy+2\negint_{B_{r}(0)} |D^2 u(y) -P_{2r}|^2\,dy\\
&\leq 2
\negint_{B_r(0)} |D^2 u(y) -P_r|^2\,dy+2^{n+1}\negint_{B_{2r}(0)} |D^2 u(y) -P_{2r}|^2\,dy \leq C,
\end{align*}
which proves the result.
\end{proof}

The following result shows that if $P_r$ is bounded, then (up to a linear function) so is $|u|/r^2$ inside $B_r$.
\begin{lemma}
\label{lem:bounded}
Assume that $u(0)=\nabla u(0)=0$. Then
there exists a universal constant $C_1$ such that
\begin{equation}
\label{eq:uPr}
\sup_{B_r(0)}\left| u - \frac{1}{2}\<P_r y,y\>\right| \leq C_1r^2  \qquad \forall\,r \in (0,1/8).
\end{equation}
In particular
\begin{equation}
\label{eq:ur2}
\sup_{B_r(0)}|u| \leq (C_1+|P_r|) r^2.
\end{equation}
\end{lemma}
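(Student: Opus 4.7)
The plan is to prove the first estimate \eqref{eq:uPr}; the second estimate \eqref{eq:ur2} is immediate from the triangle inequality using $|\tfrac12\langle P_r y,y\rangle|\le\tfrac{|P_r|}{2}r^2$ on $B_r$. The central idea is to rescale and reduce the question to a uniform $L^\infty$ bound on a function vanishing to first order at the origin with controlled Hessian, where the PDE itself does not need to be used directly.

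Set
$$
v_r(x):=\frac{u(rx)}{r^2}-\tfrac12\langle P_r x,x\rangle,\qquad x\in B_1,
$$
so that $v_r(0)=0$, $\nabla v_r(0)=0$, and $D^2 v_r(x)=D^2u(rx)-P_r$. Unscaling, \eqref{eq:uPr} is equivalent to $\|v_r\|_{L^\infty(B_1)}\le C_1$ uniformly in $r\in(0,1/8)$. From Lemma~\ref{lemma:Pr} we have $\negint_{B_1}|D^2 v_r|^2\,dx=\negint_{B_r}|D^2u-P_r|^2\,dy\le C$. I would upgrade this to an $L^p$-bound $\negint_{B_1}|D^2 v_r|^p\,dx\le C_p$ for every $p<\infty$ by combining the BMO regularity $D^2u\in BMO(B_{3/4})$ from \cite{CH} (which via John-Nirenberg provides uniform $L^p$ oscillation of $D^2u$ around the BMO average $(D^2u)_{r,0}$) with the elementary observation
$$
|P_r-(D^2u)_{r,0}|^2\le\negint_{B_r}|D^2u-P_r|^2\,dy\le C,
$$
which follows by Jensen and Lemma~\ref{lemma:Pr}.

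With $\|D^2 v_r\|_{L^p(B_1)}$ bounded for every $p<\infty$, pick some $p>n$. Using $v_r(0)=0=\nabla v_r(0)$ and the fundamental theorem of calculus along rays through the origin,
$$
\nabla v_r(y)=\int_0^1 D^2 v_r(ty)\,y\,dt,\qquad v_r(y)=\int_0^1 \nabla v_r(ty)\cdot y\,dt,
$$
Minkowski's integral inequality combined with the change of variables $z=ty$ yields
$$
\|\nabla v_r\|_{L^p(B_1)}+\|v_r\|_{L^p(B_1)}\le C_{n,p}\|D^2 v_r\|_{L^p(B_1)},
$$
the relevant factor $\int_0^1 t^{-n/p}\,dt$ being finite precisely because $p>n$. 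Morrey's embedding $W^{2,p}(B_1)\hookrightarrow L^\infty(B_1)$ for $p>n$ then delivers $\|v_r\|_{L^\infty(B_1)}\le C$; rescaling proves \eqref{eq:uPr}, and \eqref{eq:ur2} follows as noted above.

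The main obstacle is the $L^p$-upgrade of the oscillation bound in Lemma~\ref{lemma:Pr}: the raw $L^2$-estimate alone cannot feed into a Morrey-type argument, which requires $p>n$. One must therefore invoke the full BMO theory of $D^2 u$ from \cite{CH} and couple it with the triangle-inequality control of $P_r$ against the BMO average $(D^2u)_{r,0}$ to obtain higher integrability. Once the higher integrability is in hand, the remainder is a soft Sobolev argument driven entirely by the vanishing conditions at the origin, and no further input from the PDE or the free boundary structure is needed at this stage.
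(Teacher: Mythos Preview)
Your proof is correct and takes a genuinely different route from the paper. The paper's argument runs through the PDE: it observes that $\bar u_r(y):=u(ry)/r^2-\tfrac12\langle P_r y,y\rangle$ solves $G(D^2\bar u_r)\in L^\infty$ with $G(Q):=F(P_r+Q)-1$ and $G(0)=0$, and then invokes interior $C^{1,\alpha}$ estimates for $G$ (from \cite{CC,CaFNL}) together with $\bar u_r(0)=\nabla\bar u_r(0)=0$ to bound $\|\bar u_r\|_{C^{1,\alpha}(B_{1/2})}$. This only yields the estimate on $B_{r/2}$, so the paper then appeals to Lemma~\ref{lem:dyadic} (the dyadic bound $|P_{2r}-P_r|\le C_0$) to pass from $B_{r/2}$ to $B_r$. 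Your argument instead upgrades the $L^2$ oscillation in Lemma~\ref{lemma:Pr} to $L^p$ for any $p$ via John--Nirenberg and the triangle inequality against $(D^2u)_{r,0}$, and then uses only real-variable tools (the radial fundamental-theorem-of-calculus identity, Minkowski, and Sobolev embedding) to reach $L^\infty$ on the \emph{full} unit ball. The payoff of your approach is that it bypasses both the elliptic $C^{1,\alpha}$ machinery and the dyadic lemma at this stage, giving \eqref{eq:uPr} on $B_r$ in one stroke; the payoff of the paper's approach is that it needs only the raw $L^2$ bound of Lemma~\ref{lemma:Pr} (not the full BMO/John--Nirenberg theory) as input before feeding into elliptic estimates, which may be preferable in settings where BMO control is not already on the table.
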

\begin{proof}
By Lemma~\ref{lemma:Pr} we know that
$$
\left\| D^2\frac{u(r y)}{r^2} - P_r\right\|_{L^2(B_1)} \leq C,
$$
that is the function $\bar u_r(y):=u(ry)/r^2 - \frac{1}{2}\<P_r y,y\>$ satisfies
$$
\left\| D^2\bar u_r \right\|_{L^2(B_1)} \leq C.
$$
We see that $F(P_r+ D^2 \bar u_r(y))=F(D^2u(ry)) \in L^\infty(B_1)$, $F(P_r)=1$, and $\bar u_r(0)=\nabla \bar u_r(0)=0$. Hence,
by interior $C^{1,\alpha}$ estimates for the elliptic operator $G(Q):=F(P_r+Q)-1$
(see for instance \cite[Chapter 5.3]{CC} and \cite[Theorem 2]{CaFNL}), we deduce that
$$
\|\bar u_r\|_{C^{1,\alpha}(B_{1/2})} \leq C.
$$
In particular
\begin{equation}
\label{eq:bound infty}
\sup_{B_{r/2}(0)}\left| \frac{u - \frac{1}{2}\<P_r y,y\>}{r^2}\right|=\sup_{B_{1/2}}|\bar u_r| \leq C.
\end{equation}
To prove that actually we can replace $r/2$ with $r$ in the equation above (see \eqref{eq:uPr}), we first apply \eqref{eq:bound infty} with $2r$ in place of $r$ to get
$$
\sup_{B_{r}(0)}\left| \frac{u - \frac{1}{2}\<P_{2r} y,y\> }{(2r)^2}  \right|\leq C, 
$$
and then we conclude by Lemma~\ref{lem:dyadic}.
\end{proof}

We now prove that if $|P_r|$ is sufficiently large then the measure of $A_r$ (see \eqref{eq:Ar})
has to decay in a geometric fashion.

\begin{proposition}
\label{prop:M}
There exists $M>0$ universal such that, for any $r \in (0,1/8)$, if $|P_r| \geq M$ then
$$
|A_{r/2}| \leq \frac{|A_r|}{2^n}.
$$
\end{proposition}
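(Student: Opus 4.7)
The plan is to show that if $|P_r|\geq M$, then the bad set $A_r\subset B_1$ (in the scaled coordinates $y=x/r$) is essentially pushed away from the origin into the annulus $B_1\setminus B_{1/2}$. Since the scaling relation gives $A_{r/2}=2(A_r\cap B_{1/2})$, hence $|A_{r/2}|=2^n|A_r\cap B_{1/2}|$, the required inequality is equivalent to $|A_r\cap B_{1/2}|\leq |A_r|/4^n$. The strategy is to compare the rescaled solution $\bar u_r(y):=u(ry)/r^2-\tfrac12\<P_r y,y\>$ to the solution of an associated ``homogeneous'' fully nonlinear equation, exploiting that the Hessian of $\bar u_r$ is forced to be large on $A_r$ while the comparison function has bounded Hessian by Evans--Krylov.

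First I would set $G(Q):=F(P_r+Q)-1$, which inherits from $F$ uniform ellipticity and convexity, and satisfies $G(0)=0$ since $F(P_r)=1$. Let $\tilde w$ solve the Dirichlet problem
$$G(D^2\tilde w)=0\quad\text{in }B_1,\qquad \tilde w=\bar u_r\quad\text{on }\partial B_1.$$
By Lemma~\ref{lem:bounded}, $\|\bar u_r\|_{L^\infty(B_1)}\leq C$, so the maximum principle gives $\|\tilde w\|_{L^\infty(B_1)}\leq C$, and Evans--Krylov yields the universal interior estimate $\|D^2\tilde w\|_{L^\infty(B_{1/2})}\leq C$.

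Next, set $f:=\bar u_r-\tilde w$, so that $f=0$ on $\partial B_1$. On $\Omega_r:=(\Omega/r)\cap B_1$ both $\bar u_r$ and $\tilde w$ solve $G=0$, so by ellipticity $\PP^-(D^2 f)\leq 0\leq\PP^+(D^2 f)$ there. On the bad set $A_r=B_1\setminus\Omega_r$, the assumption $|D^2 u|\leq K$ gives $|G(D^2\bar u_r)|=|F(D^2 u(r\cdot))-1|\leq 1+\lambda_1 K$, and hence $|\PP^\pm(D^2 f)|\leq C\chi_{A_r}$ throughout $B_1$. ABP and Caffarelli's $W^{2,n}$-estimate for Pucci-type equations then yield
$$\|f\|_{L^\infty(B_1)}\leq C|A_r|^{1/n},\qquad \|D^2 f\|_{L^n(B_{1/2})}\leq C|A_r|^{1/n}.$$

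To conclude, observe that a.e.\ on $A_r\cap B_{1/2}$,
$$|D^2 f|\geq |D^2\bar u_r|-|D^2\tilde w|=|D^2 u(ry)-P_r|-|D^2\tilde w|\geq |P_r|-K-C,$$
so $(|P_r|-K-C)^n|A_r\cap B_{1/2}|\leq\int_{A_r\cap B_{1/2}}|D^2 f|^n\leq C^n|A_r|$, and dividing gives $|A_{r/2}|/|A_r|\leq (2C)^n/(|P_r|-K-C)^n$, which is at most $1/2^n$ once $M-K-C\geq 4C$. The delicate point I expect to be the main obstacle is verifying the \emph{global} Pucci structure of $f$ on $B_1$ with right-hand side controlled solely by $\chi_{A_r}$: this is what couples the measure of the bad set to the magnitude of $|P_r|$ through ABP and the $W^{2,n}$ estimate, and the choice of exponent $p=n$ is precisely what makes the scaling $|A_r|^{1/n}$ on the $L^p$ side match the $p$-th power $(|P_r|-K-C)^n$ on the pointwise side.
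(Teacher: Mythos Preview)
Your approach is essentially identical to the paper's: same decomposition $\bar u_r = \tilde w + f$ (the paper calls these $v_r$ and $w_r$), same use of Evans--Krylov on $\tilde w$, ABP on $f$, and the same pointwise lower bound $|D^2 f| \gtrsim |P_r|$ on $A_r \cap B_{1/2}$. The paper happens to use exponent $p = 2n$ rather than your $p = n$ in the Caffarelli estimate, but either choice works.

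There is, however, a gap in your justification of the $W^{2,n}$ bound on $f$. The two-sided Pucci inequality $\PP^-(D^2 f) \leq C\chi_{A_r} \leq \PP^+(D^2 f)$ places $f$ in the class $S(\lambda_0,\lambda_1,C\chi_{A_r})$, but membership in this class alone yields only the weak $W^{2,\delta}$ estimate for some small universal $\delta$, not $W^{2,p}$ for large $p$. Caffarelli's $W^{2,p}$ theorem requires a genuine equation $G(x,D^2 f) = g$ with $G(x,0) = 0$ and controlled oscillation in $x$. The paper supplies this by writing
\[
G(x,Q) := F\bigl(P_r + D^2\tilde w(x) + Q\bigr) - 1,
\]
so that $G(x,D^2 f) = (f_r - 1)\chi_{A_r}$ and $G(x,0) = 0$; the $x$-dependence enters only through $D^2\tilde w(x)$, which is $C^\alpha$ on $B_{3/4}$ by Evans--Krylov. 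You stated only the $L^\infty$ bound on $D^2\tilde w$, but it is precisely the H\"older continuity that makes the $W^{2,p}$ theorem applicable here. Once you insert this step, your argument is complete.
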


\begin{proof}
Set $u_r(y):=u(ry)/r^2$, and let
\begin{equation}\label{rewrite}
u_r(y)=\frac{1}{2}\<P_r y,y\> + v_r(y)+w_r(y),
\end{equation}
where $v_r$ is defined as the solution of
\begin{equation}
\label{eq:vr}
\left\{
\begin{array}{ll}
F(P_r+D^2v_r)-1=0 & \text{in }B_1,\\
v_r=u_r(y)-\frac{1}{2}\<P_r y,y\>& \text{on }\p B_1,
\end{array}
\right.
\end{equation}
and by definition $w_r:=u_r-\frac{1}{2}\<P_r y,y\> -v_r$.

Set $f_r:=F(D^2u_r) \in L^\infty(B_1)$ (recall that $|D^2 u_r|\leq K$ a.e. inside $A_r$, see \eqref{eq:obstacle}).
Notice that, since $f_r=1$ outside $A_r$,
$$
F(D^2u_r)-F(P_r+D^2v_r) =(f_r-1)\chi_{A_r},
$$
so it follows by (H1) that $w_r$ solves
\begin{equation}
\label{eq:wr}
\left\{
\begin{array}{ll}
\PP^-(D^2w_r) \leq (f-1)\chi_{A_r} \leq \PP^+(D^2w_r) & \text{in }B_1,\\
w_r=0& \text{on }\p B_1.
\end{array}
\right.
\end{equation}
Hence, since  $f_r$ is universally bounded, we can apply the ABP estimate \cite[Chapter 3]{CC} to deduce that
\begin{equation}
\label{eq:ABPwr}
\sup_{B_1}|w_r| \leq C \| \chi_{A_r}\|_{L^n(B_1(0))} = C|A_r|^{1/n}.
\end{equation}
Also, since $F(P_r)=1$ and $v_r$ is universally bounded on $\partial B_1$ (see \eqref{eq:uPr}), by Evans-Krylov's theorem \cite[Chapter 6]{CC} applied to \eqref{eq:vr} we have
\begin{equation}
\label{eq:vrsmooth}
\|D^2 v_r\|_{C^{0,\alpha}(B_{3/4}(0))} \leq C. 
\end{equation}
This implies that $w_r$ solves the fully nonlinear equation with H\"older coefficients
$$
G(x,D^2w_r)=(f_r-1)\chi_{A_r} \quad \text{in }B_{3/4},\qquad G(x,Q):=F(P_r+D^2 v_r(x) +Q)-1.
$$
Since $G(x,0)=0$, we can apply \cite[Theorem 1]{CaFNL} with $p=2n$,
and using \eqref{eq:ABPwr} we obtain
\begin{equation}
\label{eq:wrAr}
\int_{B_{1/2}(0)}|D^2 w_r|^{2n} \leq
C\left(\|w_r\|_{L^\infty(B_{3/4})} + \| \chi_{A_r}\|_{L^{2n}(B_{3/4}(0))} \right)^{2n} \leq C\, |A_r|
\end{equation}
(recall that $|A_r|\leq |B_1|$).

We are now ready to conclude the proof:
since $|D^2 u_r|\leq K$ a.e. inside $A_r$ (by \eqref{eq:obstacle}),
recalling (\ref{rewrite}) we have
$$
\int_{A_r\cap B_{1/2}(0)}|D^2 v_r+D^2 w_r + P_r|^{2n}
=\int_{A_r \cap B_{1/2}(0)} |D^2 u_r|^{2n} \leq K^{2n}|A_r|.
$$
Therefore, by \eqref{eq:vrsmooth} and \eqref{eq:wrAr},
\begin{align*}
|A_r\cap B_{1/2}(0)|\,|P_r|^{2n}&=\int_{A_r\cap B_{1/2}(0)}|P_r|^{2n}\\
&\leq 3^{2n}\biggl( \int_{A_r\cap B_{1/2}(0)}|D^2 v_r|^{2n} +  \int_{A_r\cap B_{1/2}(0)}|D^2 w_r|^{2n}
+K^{2n}|A_r|\biggr)\\
&\leq  3^{2n} \biggl(|A_r\cap B_{1/2}(0)|\,\|D^2 v_r\|_{L^\infty(B_{1/2}(0))}+ \int_{B_{1/2}(0)}|D^2 w_r|^{2n}+K^{2n}|A_r|\biggr)\\
&\leq C\,|A_r\cap B_{1/2}(0)| + C\, |A_r|.
\end{align*}
Hence, if $|P_r|$ is sufficiently large we obtain
$$
|A_r\cap B_{1/2}(0)|\,|P_r|^{2n} \leq C|A_r| \leq  \frac{1}{4^n}|P_r|^{2n}|A_r|.
$$
Since $|A_{r/2}|= 2^n|A_r\cap B_{1/2}(0)|$, this gives the desired result.
\end{proof}


\subsection{Proof of Theorem~\ref{thm:C11}}\label{main-proof}

Since by assumption $|D^2u|\leq K$ a.e. outside $\Omega$, it suffices to prove that
$|D^2u(x^0)|\leq C$ for a.e. $x^0 \in \bar\Omega \cap B_{1/2}$, for some $C>0$ universal.

Fix $x^0 \in \bar\Omega \cap B_{1/2}$ such that 
$u$ is twice differentiable at $x^0$, and $x^0$ a Lebesgue point for $D^2 u$ (these properties
hold at almost every point).
With no loss of generality we can assume that $x^0=0$ and that $u(0)=\nabla u(0)=0$.

Let $M >0$ as in Proposition~\ref{prop:M}.
We distinguish two cases:

\begin{enumerate}
    \item[(i)]  $\liminf_{k\to \infty}|P_{2^{-k}}| \leq 3M$. 
    \item[(ii)]   $\liminf_{k\to 0}|P_{2^{-k}}| \geq 3M$. 
\end{enumerate}
    
   Using \eqref{eq:ur2} and the fact that $u$ is twice differentiable at $0$, in case  (i)
   we immediately obtain
$$
|D^2u(0)|\leq \liminf_{k\to \infty} \sup_{B_{2^{-k}}(0)}\frac{2|u|}{2^{-2k}} \leq 2(C_1+3M).
$$

In case (ii), let us define
$$
k_0:=\inf\Bigl\{ k \geq 2\,:\, |P_{2^{-j}}| \geq 2M \quad \forall\,j \geq k\Bigr\}.
$$
By the assumption that $\liminf_{k\to 0}|P_{2^{-k}}| \geq 3M$, we see that $k_0<\infty$.
In addition, since $P_{1/4}$ is universally bounded, up to enlarge $M$
we can assume that $k_0\geq 3$.

Let us observe that, since by definition $|P_{2^{-k_0-1}}|\leq 2M$, by Lemma~\ref{lem:dyadic} we obtain 
\begin{equation}
\label{eq:Pk0}
|P_{2^{-k_0}}| \leq 2M+C_0.
\end{equation}

We now define the function $\bar u_0:=4^{k^0}u(2^{-k_0}x) - \frac{1}{2}\<P_{2^{-k_0}}x,x\>$.
Observe that $\bar u_0$ is a solution of the fully nonlinear equation
\begin{equation}
\label{eq:u0}
G(D^2 \bar u_0)=(f_{2^{-k_0}}-1)\chi_{A_{2^{-k_0}}}\qquad \text{in }B_1,
\end{equation}
where $G(Q):=F(P_{2^{-k_0}}+Q)-1$ and $f_{2^{-k_0}}(x):=F(D^2u(2^{-k_0}x))$ is universally bounded.
In addition, since $|P_{2^{-k}}| \geq 2M$ for all $k \geq k_0$,
Proposition~\ref{prop:M} gives
$$
|A_{2^{-k_0+j}}| \leq 2^{-jn}|A_{2^{-k_0}}|\leq 2^{-jn} \qquad \forall\, j \geq 0,
$$
from which we deduce  that $(f_{2^{-k_0}}-1)\chi_{A_{2^{-k_0}}}$ decays in $L^n$ geometrically fast:
$$
\negint_{B_r}\bigl|(f_{2^{-k_0}}-1)\chi_{A_{2^{-k_0}}}\bigr|^n \leq C\negint_{B_r}|\chi_{A_{2^{-k_0}}}| \leq Cr^n \qquad \forall\, r\in (0,1).
$$
Hence, since $G(0)=0$, we can apply \cite[Theorem 3]{CaFNL} to deduce that $\bar u_0$ is $C^{2,\alpha}$ at the origin, with universal bounds. In particular this implies
$$
|D^2\bar u_0(0)| \leq C.
$$
Since $D^2u(0)=D^2\bar u_0(0)+P_{2^{-k_0}}$ and $P_{2^{-k_0}}$ is universally bounded
(see \eqref{eq:Pk0}), this concludes the proof.


\section{Non-degeneracy and global solutions}
\label{sect:non deg}

\subsection{Local non-degeneracy}
Non-degeneracy is a corner-stone for proving smoothness of the free boundary. 
This property says that the function grows quadratically (and not slower) away from the free boundary points, that is, $\sup_{B_r(x^0)} |u - u(x^0) - (x-x^0)\cdot \nabla u (x^0)| \gtrsim  r^2$ for any $x^0 \in \overline\Omega$.
However, while in the case $\Delta u=\chi_{\{u \neq 0\}}$ or $\Delta u=\chi_{\{\nabla u \neq 0\}}$ non-degeneracy is known to hold true,
in the case $\Delta u=\chi_{\{D^2u\neq 0\}}$ non-degeneracy may fail.

To see this, one can consider the one dimensional problem $u''=\chi_{\{u''\neq 0\}}$.
Every solution is obtained by linear functions and quadratic polynomial glued together in a $C^{1,1}$ way. 
In particular, if $\{I_j\}_{j\ in \N}$ is a countable family of disjoint intervals, the function
$$
u(t):=\int_0^t \int_0^s \chi_{\Omega}(\tau)\,d\tau\,ds,\qquad \Omega:=\cup_jI_j
$$
satisfies $u''=\chi_{\Omega}=\chi_{\{u''\neq0\}}$, and if we choose $I_j$ such that
$$
\frac{\left|\Omega\cap (-r,r)\right|}{2r} \to 0 \qquad \text{as $r \to 0$},
$$
then it is easy to check that $u(r)=o(r^2)$ as $r \to 0$.

A possible way to rule out the above counterexample may be to consider only points in $\overline\Omega$ such that $\Omega$ has a uniform density inside $B_r(x^0)$.
We will not investigate this direction here. Instead, we show that non-degeneracy 
holds under the additional assumption that $\Omega\supset\{\nabla u \neq 0\}$
(which is sufficient to include into our analysis the cases $F(D^2 u)=\chi_{\{u \neq 0\}}$ and
$F(D^2u)=\chi_{\{\nabla u \neq 0\}}$).

\begin{lemma}\label{supercond-nondegeneracy}
Let $u:B_1 \to \R$ be a $W^{2,n}$ solution of \eqref{eq:obstacle}, 
assume that $F$ satisfies (H0)-(H2), and that $\Omega\supset\{\nabla u \neq 0\}$.
Then, for any $x^0 \in \overline\Omega\cap B_{1/2}$,
$$
\max_{\p B_r(x^0)} u \geq u(x^0)+\frac{r^2}{2n\lambda_1}\qquad \forall\, r \in (0,1/4).
$$
\end{lemma}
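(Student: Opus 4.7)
The plan is to build a strict-subsolution barrier and combine an interior maximum principle in $\O$ with the hypothesis $\O\supset\{\n u\neq 0\}$, which forces $\n u\equiv 0$ on $B_1\setminus\O$. For $\e>0$ small set
$$
\Phi_\e(x):=\frac{(1-\e)|x-x^0|^2}{2n\l_1},\qquad v_\e:=u-\Phi_\e.
$$
Since $D^2\Phi_\e=\frac{1-\e}{n\l_1}\Id$ and $\PP^+\!\bigl(\frac{1-\e}{n\l_1}\Id\bigr)=1-\e$, (H0)--(H1) give $F(D^2\Phi_\e)\leq 1-\e$. A second application of (H1) then yields, inside $\O$ in the viscosity sense,
$$
\PP^+(D^2 v_\e)\ \geq\ F(D^2u)-F(D^2\Phi_\e)\ \geq\ 1-(1-\e)\ =\ \e\ >\ 0,
$$
so $v_\e$ is a \emph{strict} viscosity subsolution of $\PP^+=0$ in $\O$. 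In particular a constant touching from above at an interior maximum in $\O$ would give $\PP^+(0)=0\geq\e$, a contradiction; hence $v_\e$ cannot attain an interior maximum at any point of $\O$.

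I would first reduce to the case $x^0\in\O$. If instead $x^0\in\p\O$, pick $x^k\in\O$ with $x^k\to x^0$ and apply the (yet to be established) interior result at $x^k$ with radius $r_k:=r-|x^k-x^0|$, so that $B_{r_k}(x^k)\subset B_r(x^0)$; picking $z_k\in\p B_{r_k}(x^k)$ with $u(z_k)\geq u(x^k)+r_k^2/(2n\l_1)$ and extracting a subsequential limit $z_k\to z^*\in\p B_r(x^0)$ gives the conclusion at $x^0$ by continuity of $u$.

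Now assume $x^0\in\O$ and suppose, for contradiction, that $\max_{\p B_r(x^0)}u<u(x^0)+\frac{r^2}{2n\l_1}$. For $\e$ small this implies $\max_{\p B_r(x^0)}v_\e<v_\e(x^0)$, so $v_\e$ attains its maximum on $\overline{B_r(x^0)}$ at some interior point $y_*$. I would then exclude every possibility for $y_*$. If $y_*\in\O$, the strict subsolution property gives an immediate contradiction. If $y_*\in\p\O$, then $\n u(y_*)=0$ by hypothesis, and the interior-maximum property of the $C^{1,1}$ function $v_\e$ gives $\n v_\e(y_*)=0$; combining these forces $\n\Phi_\e(y_*)=0$, i.e.\ $y_*=x^0$, contradicting $x^0\in\O$. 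Finally, if $y_*\in B_r(x^0)\setminus\overline\O$, let $U$ be the connected component of this open set containing $y_*$: on $U$ one has $\n u\equiv 0$, so $u$ is constant and $v_\e$ is strictly concave and smooth; its unique critical point $x^0$ does not belong to $\overline U$ (since $x^0\in\O$ is open), so $\max_{\overline U}v_\e$ is achieved on $\p U\subset\p B_r(x^0)\cup\p\O$, producing either a direct contradiction (the $\p B_r$ part) or reducing to the previous case at a point of $\p\O$.

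The main obstacle is that $v_\e$ is not a subsolution on $B_1\setminus\O$ (in fact $\PP^+(D^2 v_\e)=-\l_0/\l_1<0$ a.e.\ there), so no global maximum principle on all of $B_r(x^0)$ is available. The role of the assumption $\O\supset\{\n u\neq 0\}$ is precisely to pin down any potential interior maximum of $v_\e$ outside $\O$ to the unique critical point of $\Phi_\e$, namely $x^0$; once the reduction to $x^0\in\O$ is in place, this alternative is ruled out and the contradiction is complete.
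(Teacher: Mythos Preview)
Your proof is correct and follows the same barrier-and-maximum-principle strategy as the paper, with a small technical twist. The paper defines the unperturbed barrier $v(x)=u(x)-\frac{|x-x^0|^2}{2n\lambda_1}$, reduces by density not just to $x^0\in\Omega$ but to $x^0\in\Omega\cap\{\nabla u\neq 0\}$, and then argues that any interior maximum $y$ of $v$ in $B_r(x^0)$ must satisfy $\nabla u(y)=(y-x^0)/(n\lambda_1)$; the extra assumption $\nabla u(x^0)\neq 0$ forces $y\neq x^0$, hence $\nabla u(y)\neq 0$ and $y\in\Omega$, where the \emph{strong} maximum principle makes $v$ constant on a component that then must fill all of $B_r(x^0)$. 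Your $\e$-perturbation produces a \emph{strict} subsolution of $\PP^+$, so you can rule out interior maxima in $\Omega$ with only the weak maximum principle and no appeal to the strong one; this lets you skip the density reduction to $\{\nabla u\neq 0\}$, at the price of treating the region $B_r(x^0)\setminus\overline\Omega$ by a separate (elementary) concavity argument. The two executions are equivalent in strength and length.
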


\begin{proof}
By approximation, it suffices to prove the estimate for $x^0 \in \Omega$.
In addition, since $D^2u=0$ a.e. inside the set $\{\nabla u=0\}$, $F(D^2u)=1$ in $\Omega$,
and $F(0)=0$ (by (H0)), we see that $\{\nabla u=0\}$ has measure zero inside $\Omega$.
This implies that the set $ \Omega\cap \{\nabla u\neq 0\}$ is dense inside $\overline\Omega$,
and so we only need to prove the result when $x^0 \in \Omega\cap \{\nabla u\neq 0\}$.

Let us define the $C^{1,1}$ function (recall that $u \in C^{1,1}$ because of Theorem~\ref{thm:C11})
$$
v(x):=u(x)-\frac{|x-x^0|^2}{2n\lambda_1}.
$$
By (H1) we see that
\begin{equation}
\label{eq:vsub}
F(D^2v)=F\bigl(D^2u - \Id/(n\lambda_1)\bigr)
\geq F(D^2u)-\PP^+\bigl(\Id/(n\lambda_1)\bigr) \geq 0 \qquad \text{in }\Omega \cap B_1.
\end{equation}
We claim that 
$$
\max_{\p B_r(x^0)}v =\sup_{B_r(x^0)}v.
$$
Indeed, if there exists an interior maximum point $y \in B_r(x^0)$,
then 
\begin{equation}
\label{eq:nabla}
0=\nabla v(y)=\nabla u (y) - \frac{y-x^0}{n\l_1}.
\end{equation}
Since by assumption $x^0 \in \{\nabla u\neq 0\}$ we have $\nabla u(x^0)\neq 0$, so $y \neq x^0$.
In particular $\nabla u (y)= \frac{y-x^0}{n\l_1} \neq 0$, and thus $y \in \Omega$.
Recalling that $v$ is a subsolution for $F$ inside $\Omega \cap B_1$ (see (H0) and \eqref{eq:vsub}),
by the strong maximum principle $v$ is constant inside the connected component of $\Omega\cap  B_r(x^0)$ containing $y$. Hence $\nabla v(y)=0$ inside such component, so by \eqref{eq:nabla}
we deduce that  this connected component contains the  whole ball $B_r(x^0)$ (since $\nabla u=0$ outside $\Omega$).
Hence $v$ is constant in $B_r(x^0)$ and claim is trivially true.

Thanks to the claim we obtain
$$
\max_{\p B_r(x^0)} u -\frac{r^2}{2n\l_1}=\max_{\p B_r(x^0)}v \geq v(x^0)=u(x^0),
$$
which proves the result.
\end{proof}

\subsection{Classification of global solutions}

Now that non-degeneracy is proven, we can start considering blow-up
solutions and try to classify them. We shall treat the case 
$\Omega\supset\{\nabla u \neq 0\}$. Our results would work also for the case $\Omega \supset \{D^2 u \neq 0\}$ if the assumptions are strengthened in a way 
that solutions stay stable/invariant in a blow-up regime. 
 
Since we will use  minimal diameter to measure sets, we need some facts about the  stability property minimal diameter.
Let us first recall the definition for $\delta_r(u,x)$:
$$
\delta_r(u,x):=\frac{\operatorname{MD}(\Lambda \cap B_r(x))}{r},\qquad \Lambda :=B_1\setminus \Omega.
$$
We remark that, for polynomial global solutions $P_2=\sum a_j\,x_j^2$ (with $a_j$ such that $F(D^2P_2)=1$), one has 
\begin{equation}\label{P2}
\delta_r(P_2,0)=0.
 \end{equation}
Indeed, the zeros of the  gradient of  a second degree homogeneous polynomial $P_2$  always lie on a hyperplane.

The next observation is the stability of $\delta_r(u,x)$ under scaling: more precisely,
if $x \in \partial\Omega\cap B_1$ and we rescale $u$ as $u_r(y):=\frac{u(x+ry) - u(x)}{r^2}$
 (notice that $\nabla u(x)=0$ for all $x \in \partial \Omega$), then
\begin{equation}\label{stability1}
\delta_r(u,x)= \delta_1(u_r,0)
 \end{equation}
 which along with the fact that $\limsup_{r\to 0}\Lambda (u_r) \subset \Lambda (u_0)$ whenever $u_r $ converges to some function $u_0$ (see \cite[Proposition 3.17 (iv)]{PSU}) gives 
 \begin{equation}\label{stability2}
\limsup_{r \to 0} \delta_r(u,x^0)\leq  \delta_1(u_0,0) .
  \end{equation} 
Since any limit of $u_r$ will be a global solution of \eqref{eq:obstacle}
(i.e., it solves \eqref{eq:obstacle} in the whole $\R^n$), we are interested in classifying global solutions.

In the next proposition we classify global solution with a ``thick free boundary''.

\begin{proposition}\label{global-convexity}
Let $u:\R^n \to \R$ be a $W^{2,n}$ solution of \eqref{eq:obstacle} inside $\R^n$, 
assume that $F$ is convex and satisfies (H0)-(H1), and that $\Omega\supset\{\nabla u \neq 0\}$.
Assume that there exists $\epsilon_0>0$ such that
\begin{equation}\label{min-diam}
\delta_r(u,x^0) \geq \epsilon_0 \qquad \forall\,r>0,\,\forall\,x^0 \in \partial\Omega.
\end{equation}
Then $u$ is a half-space solution, i.e., up to a rotation,
$u(x)=\g [(x_1)_+]^2/2+c$, where $\g \in (1/\lambda_1,1/\l_0) $ is such that $F(\g e_1\otimes e_1 )=1$ and $c \in \R$.
\end{proposition}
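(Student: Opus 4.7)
The plan is to reduce the problem to classifying $2$-homogeneous global solutions via a blow-down, use (H2) to establish convexity of the blow-down, and identify its coincidence set as a half-space through the thickness hypothesis. As a first step, set $u_R(x):=u(Rx)/R^2$. By Theorem~\ref{thm:C11} applied at every radius, the family $\{u_R\}$ is locally uniformly $C^{1,1}$ on $\R^n$, so along some $R_k\to\infty$ we have $u_{R_k}\to u_\infty$ in $C^{1,\a}_{\rm loc}$. The limit $u_\infty$ is a global $W^{2,n}$ solution of \eqref{eq:obstacle} still satisfying $\Omega_\infty\supset\{\nabla u_\infty\neq 0\}$. Combining the $C^{1,1}$ bound with Lemma~\ref{supercond-nondegeneracy}, a Weiss-type monotonicity computation (or the ``growth vs.\ flatness'' dichotomy in the spirit of \cite{ALS}) forces $u_\infty$ to be $2$-homogeneous, and the thickness hypothesis \eqref{min-diam} passes to $u_\infty$ via \eqref{stability2}, yielding $\delta_r(u_\infty,0)\ge\epsilon_0$ for every $r>0$.

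Next, I would use convexity of $F$ to show that $u_\infty$ is convex. Since $F$ is convex and $u_\infty\in C^{2,\a}_{\rm loc}(\Omega_\infty)$ by Evans--Krylov, differentiating $F(D^2u_\infty)=1$ twice in an arbitrary direction $e$ and discarding the non-negative $D^2F$-contribution gives
\[
F_{ij}(D^2 u_\infty)\,\partial_{ij}(\partial_{ee}u_\infty)\le 0 \qquad \text{in }\Omega_\infty.
\]
On $\Lambda_\infty:=\R^n\setminus\Omega_\infty$ the inclusion $\Omega_\infty\supset\{\nabla u_\infty\neq 0\}$ forces $u_\infty$ to be locally constant, so $w:=\partial_{ee}u_\infty\equiv 0$ there. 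Because $w$ is $0$-homogeneous (inherited from the $2$-homogeneity of $u_\infty$), checking $w\ge 0$ on $\R^n$ reduces to checking it on each connected component of $\Omega_\infty\cap\{1/2<|x|<2\}$; the vanishing of $w$ on the $\Lambda_\infty$-side of $\partial\Omega_\infty$ provides a one-sided barrier, and the maximum principle applied to the supersolution inequality then delivers $w\ge 0$. Hence $D^2 u_\infty\ge 0$ and $u_\infty$ is convex.

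Convexity and $2$-homogeneity make $\Lambda_\infty$ a closed convex cone with vertex at $0$, and the thickness assumption prevents it from lying in any hyperplane. A Hopf-type argument along an ``edge'' of $\Lambda_\infty$ rules out proper convex sub-cones of a half-space (such an edge would force $\nabla u_\infty$ to vanish to too high an order on the two adjoining faces, contradicting $F(D^2u_\infty)=1$ in $\Omega_\infty$), leaving $\Lambda_\infty=\{x\cdot e\le 0\}$ up to rotation. Solving the one-sided problem $F(D^2u_\infty)=1$ in $\{x\cdot e>0\}$ with $u_\infty=|\nabla u_\infty|=0$ on $\{x\cdot e=0\}$ then gives the unique quadratic $u_\infty(x)=\g[(x\cdot e)_+]^2/2$ with $\g\in(1/\l_1,1/\l_0)$ determined by $F(\g\,e\otimes e)=1$. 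To descend to $u$ itself, one re-runs the convexity argument directly for $u$---here the thickness hypothesis at \emph{every} boundary point and every scale supplies the boundary barriers that $2$-homogeneity provided for $u_\infty$---and then applies the same classification, obtaining $u(x)=\g[(x\cdot e-c_0)_+]^2/2+c$. The main obstacle throughout is this convexity step: rigorously propagating $w\ge 0$ across $\partial\Omega_\infty$, where the linearized operator degenerates and no classical boundary regularity of the free boundary is available, requires either an Alexandrov-type sup-convolution/touching argument or a careful viscosity-sense boundary comparison exploiting the $0$-homogeneity of $w$.
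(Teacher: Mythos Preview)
Your proposal has a genuine gap at its very first step: you assert that the blow-down $u_\infty$ is $2$-homogeneous by a ``Weiss-type monotonicity computation,'' but no such monotonicity formula is available for fully nonlinear $F$. The paper stresses in its introduction that monotonicity tools break down in this setting, and nowhere does it establish homogeneity of blow-downs. Since your convexity argument for $u_\infty$ rests entirely on the $0$-homogeneity of $w=\partial_{ee}u_\infty$ (to reduce to a bounded annulus and to transfer boundary values from one sphere to the other), the whole chain collapses without this ingredient. You also concede that propagating $w\ge 0$ across $\partial\Omega_\infty$ is the ``main obstacle'' and leave it to an unspecified Alexandrov/touching argument; this is a second incomplete step.

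The paper avoids both difficulties by proving convexity of $u$ itself, not of a blow-down, and without any homogeneity. It argues by contradiction: if $m:=\inf_{z,e}\partial_{ee}u(z)<0$, pick near-minimizers $y^j$, rescale by $d_j={\rm dist}(y^j,\partial\Omega)$, and pass to a limit $u_\infty$ achieving $\partial_{11}u_\infty(0)=m$. The convexity of $F$ makes $\partial_{11}u_\infty$ a supersolution, so the strong maximum principle forces $\partial_{11}u_\infty\equiv m$ and $\partial_{1j}u_\infty\equiv 0$ in the component containing $B_1(0)$; integrating yields that $u_\infty$ is a second-order polynomial globally. But a polynomial has $\delta_r=0$, contradicting the thickness hypothesis (which is stable under this rescaling). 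Once $u$ is convex, $\Lambda(u)$ is convex; a further blow-down has coincidence set equal to the largest cone inside $\Lambda(u)$, and a barrier of the form $r^\alpha(e^{-\beta\sin(\alpha\theta)}-e^{-\beta})$ with $\alpha<2$, compared against $\partial_1 u_\infty$, rules out non-half-space cones by contradicting Lipschitz regularity of $\nabla u_\infty$. The final identification is by even reflection across $\partial\Lambda(u)$ and a Liouville argument (Evans--Krylov at all scales). None of these steps needs homogeneity or a Weiss formula.
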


\begin{proof}

We first prove that $u$ is convex.
Suppose by contradiction that  $u$ is not,
and set
    $$
    m:=\inf_{z \in \Omega,\,e \in \mathbb\S^{n-1}} \partial_{ee}u(z)<0.
    $$
    Observe that, thanks to Theorem~\ref{thm:C11}, $u$ is globally $C^{1,1}$ in $\R^n$, so $m$ is finite.
   
     Let us consider sequences $y^j\in \Omega$ and $e^j \in \mathbb\S^{n-1}$ such that
        $$
         \partial_{e^je^j}u(y^j) \to m\qquad \text{as $j\to\infty$}.
        $$
        Rescale $u$ at $y^j$ with respect to $d_j:={\rm dist}(y^j,\partial\Omega)$, i.e.,
        $$u_j(x):=\frac{u(d_jx+y^j)-u(y^j) -d_j \nabla u(y^j)\cdot x}{d_j^2}.
        $$
        Also, up to rotate the system of coordinates,
        we can assume that (up to subsequences) $e^j \to e_1$.
        Then the functions $u^j$ still satisfy \eqref{eq:obstacle}, and they converge to another global solution $u_\infty$ which satisfies
        $\partial_{11}u_\infty(0)=-m$.
        Let us observe that, by convexity of $F$, $\partial_{11}u_\infty$ is a supersolution of the linear operator
        $F_{ij}(D^2u_\infty)\partial_{ij}$.
         Hence,
        since $\partial_{11}u_\infty(z) \geq -m$ inside $B_1(0)$,
        by the strong maximum principle we deduce that 
        $\partial_{11}u_\infty$ is constant inside the connected component containing $B_1(0)$
        (call it $\Omega_\infty$).
    Also, since $D_{ee}u_\infty(z) \geq -m$ inside $B_1(0)$
        for any $e \in \mathbb\S^{n-1}$, it follows that $e_1$ is an eigenvector of $D^2u$ at every point
        (which corresponds to the smallest eigenvalue).
        In particular this implies that $\partial_{1j}u_\infty=0$ for any $j=2,\ldots,n$ inside $\Omega$.
    Hence, integrating $u_\infty$
        in the direction $e_1$ gives 
    \begin{equation} 
    \label{eq:formula u0}
     u_\infty(x)= P(x) \qquad \text{inside $\Omega_\infty$,}
      \end{equation}
      where 
      $$
      P(x):=-mx_1^2/2 + ax_1+b(x'), \quad x'=(x_2, \ldots, x_n).
       $$
      We now observe that the set where $\partial_1P$ vanishes corresponds to the hyperplane $\{x_1=a/m\}$.
      Since $\nabla u_\infty=0$ on $\partial \Omega_\infty$, we deduce that
      $\partial \Omega_\infty \subset \{x_1=a/m\}$.
       We now distinguish two cases:
\begin{enumerate}
\item[-]
    If $\partial \Omega_\infty \neq \{x_1=a/m\}$ then the set $\Omega_\infty$ contains $\R^n\setminus \{x_1=a/m\}$ (since $\nabla u_\infty$
      could not vanish anywhere else), and so $F(D^2 u_\infty)=1$ a.e. in $\R^n$.
      Since $u_\infty$ grows at most quadratically (because of the global $C^{1,1}$ regularity), we can apply Evans-Krylov's theorem \cite[Chapter 6]{CC} to $u_\infty(Ry)/R^2$ inside $B_1$ to deduce that
      $$
      \sup_{x,z \in B_{R}} \frac{|D^2u_\infty(x)-D^2u_\infty(z)|}{|x-z|^\alpha} \leq \frac{C}{R^\alpha}.
      $$
      Letting $R \to \infty$ we obtain that $D^2u_\infty$ is constant, and so $u_\infty$ is a second
      order polynomial.
\item[-]
    If $\partial \Omega_\infty= \{x_1=a/m\}$, since $\nabla u_\infty=0$ on $\partial \Omega_\infty$
    we get that $\nabla_{x'}P=0$ on the hyperplane $ \{x_1=a/m\}$.
    Hence $b$ is constant and so
    $$
    u_\infty= -mx_1^2/2 + ax_1+b\qquad \text{inside $\{x_1>a/m\}$},
    $$
     which contradicts (H0) and (H1) (because $F(D^2u_\infty)=1$
    while $D^2u_\infty=-m{\rm Id}$ is negative definite).
   \end{enumerate}
    In conclusion we have proved that if $u$ is not convex, then $u_\infty$ is a second order polynomial.
    Invoking the minimal diameter assumption (\ref{min-diam}) and the stability 
    properties (\ref{stability1})-(\ref{stability2}) along with (\ref{P2}), we conclude
    that $u_\infty$ cannot be a second degree polynomial, and thus a contradiction.

 Hence, we have proved that $u$ is convex.
 Recall that, since $F(D^2u)=1$ in $\Omega$, we have $|\Omega \setminus \{\nabla u=0\}|$,
 and by convexity of $u$ and the thickness assumption it is easy to see that $\Omega=\{\nabla u\neq 0\}$.
 
 We now show that the set $\Lambda (u)=\{\nabla u=0\}$ is a half-space.
For simplicity we may assume the origin is on the free boundary. 
Consider a blow-down $u_\infty$ obtained as a limit (up to a subsequence) of $u(Ry)/R^2$ as $R \to \infty.$
It is not hard to realize that $\Lambda (u_\infty)= \{x \in \Lambda (u): \ tx \in \Lambda (u) \ \forall \,t>0  \}$. In other words, the  coincidence set
 for the blow-down is convex,
 and coincides with the largest cone (with vertex at the origin) in the coincidence set of the function $u$.
 Assume by contradiction that $\Lambda (u_\infty)$ is not a half-space.
 Then, in some suitable system of coordinates
 $$
 \Lambda(u_\infty) \subset \mathcal C_{\theta_0}:=\bigl\{x \in \R^n : \ x=(\rho \cos\theta,\rho\sin\theta,x_3,\ldots,x_n), \, \theta_0\leq |\theta|\leq \pi\bigr\}
 $$
 for some $\theta_0>\pi/2$. Hence, if we choose $\theta_1\in (\pi/2,\theta_0)$
and set $\alpha:=\pi/\theta_1$, 
then it is easy to check that, for $\beta>0$ sufficiently large (the largeness depending only on $\theta_1$
and the ellipticity constants of $F$), the function 
$$
v=r^\a\bigl(e^{-\beta\sin(\a\theta)} - e^{-\beta}\bigr)
$$
is a positive subsolution for the linear operator $F_{ij}(D^2u)\partial_{ij}$
inside $\R^n\setminus \mathcal C_1$ (see for instance \cite{Lee}), and it vanishes on
 $\partial\mathcal C_{\theta_1}$.
 Hence, because $\partial_1 u_\infty>0$ inside $\R^n\setminus \mathcal C_{\theta_0}$ (by convexity of $u_\infty$) and $\theta_0>\theta_1$,
by the comparison principle we deduce that 
$$
v \leq \partial_1 u_\infty.
$$
However, since $\a<1$, this contradicts the Lipschitz regularity of $\partial_1 u_\infty$ at the origin.

So $\Lambda (u_\infty)$ is a half space, and since $\Lambda (u_\infty) \subset \Lambda (u)$
and the latter set is convex,
we deduce   that $\Lambda (u)$ is a half-space as well.
 
 Finally, to conclude the proof, it suffices to consider the function $w$ obtained by reflecting
 $u$ with respect to $\partial \Lambda (u)$ in a even fashion: in this way $w \in C^{1,1}$
 (since $\nabla u$ vanishes on $\partial \Lambda(u)$)
 and $F(D^2w)=1$
 a.e. in $\R^n$, so $w$ is a second order polynomial. Hence also $u$ is a second order polynomial
 inside the half-space $\R^n\setminus \Lambda (u)$,
 and since $\nabla u=0$ on the hyperplane $\partial \Lambda(u)$, it is immediate  to check that
 it has to be a half-space solution.
  \end{proof}


\section{Local solutions and directional monotonicity}\label{dir-mon}

In this section we shall prove a directional monotonicity for solutions to our equations. 
In the next section we will use Lemmas~\ref{lemma:monotonicity 1} and~\ref{lemma:monotonicity 2}
below to show that,
if $u$ is close enough to a half-space solution $\g [(x_1)_+]^2$ in a ball $B_r$, then
for any $e \in \mathbb S^{n-1}$ with $e \cdot e_1\geq s>0$ we have $C_0\partial_e u  - u \geq 0 $ inside
$B_{r/2}$.

\subsection{The case $\Omega\supset \{u\neq 0\}$}

\begin{lemma}
\label{lemma:monotonicity 1}
Let $u:B_1 \to \R$ be a $W^{2,n}$ solution of \eqref{eq:obstacle}
with $\Omega\supset\{u \neq 0\}$.
Assume that $C_0\partial_eu-u \geq -\e_0$ in $B_1$ for some $C_0,\e_0 \geq 0$,
and that $F$ is convex and satisfies (H0)-(H1).
Then $C_0\partial_eu-u \geq 0$ in $B_{1/2}$ provided $\e_0 \leq 1/(8n\l_1)$.
\end{lemma}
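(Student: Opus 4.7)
The plan is to argue by contradiction with a localized barrier: assuming $w:=C_0\partial_e u-u$ satisfies $w(x_0)<0$ at some $x_0\in B_{1/2}$, I will construct an auxiliary function whose interior minimum forces $D^2 w$ to be too large from below, contradicting a super-solution property for $w$ that comes from convexity of $F$.

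\emph{Step 1 (super-solution property).} The main analytic input is that $\PP^-(D^2 w)\le -1$ (in the viscosity/a.e.\ sense) on $\Omega$. To see this I linearize: in $\Omega$ one has $F(D^2 u)=1$ a.e., and convexity of $F$ applied with $A=0$ together with (H0) gives $Lu\ge 1$, where $L:=F_{ij}(D^2u)\partial_{ij}$ is uniformly elliptic with bounds $\l_0$ and $\l_1$. A two-sided difference-quotient argument, using convexity of $F$ from both sides, yields $L(\partial_e u)=0$ in $\Omega$. Combining, $Lw=-Lu\le -1$, and therefore $\PP^-(D^2 w)\le Lw\le -1$ on $\Omega$.

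\emph{Step 2 (perturbed minimum and contradiction).} Pick $\delta$ in the open interval
$$
4(\e_0+w(x_0))<\delta<\frac{1}{2n\l_1},
$$
which is non-empty because $w(x_0)<0$ together with $\e_0\le 1/(8n\l_1)$ gives $4(\e_0+w(x_0))<4\e_0\le 1/(2n\l_1)$. On $\overline{B_{1/2}(x_0)}\subset B_1$ set $\psi(x):=w(x)+\delta|x-x_0|^2$; since $w\ge -\e_0$ globally, on $\partial B_{1/2}(x_0)$ one has $\psi\ge -\e_0+\delta/4>w(x_0)=\psi(x_0)$, so $\psi$ attains its minimum at an interior point $y^*\in B_{1/2}(x_0)$ with $\psi(y^*)\le \psi(x_0)<0$. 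I split by the location of $y^*$. If $y^*\in(B_1\setminus\Omega)^\circ$, the hypothesis $\Omega\supset\{u\ne 0\}$ forces $u\equiv 0$ (hence $w\equiv 0$) in a neighborhood of $y^*$, giving $\psi(y^*)=\delta|y^*-x_0|^2\ge 0$, which contradicts $\psi(y^*)<0$. If $y^*\in\Omega$ (or $y^*\in\partial\Omega$, by semicontinuity from inside $\Omega$), the smooth function $\eta(x):=\psi(y^*)-\delta|x-x_0|^2$ touches $w$ from below at $y^*$, and the super-solution property from Step~1 forces
$$
-2\delta n\l_1=\PP^-(-2\delta\,\Id)=\PP^-(D^2\eta(y^*))\le -1,
$$
i.e., $\delta\ge 1/(2n\l_1)$, contradicting the choice of $\delta$.

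\emph{Main obstacle.} The step I expect to be most delicate is Step~1: while $Lu\ge 1$ is immediate from the convexity subgradient inequality, establishing $L(\partial_e u)=0$ (and hence $\PP^-(D^2 w)\le -1$) in a form robust enough to test against the barrier requires a careful difference-quotient argument, exploiting convexity of $F$ from both sides at once. Once Step~1 is in place, the perturbation argument of Step~2 is essentially standard, and the precise constant $1/(8n\l_1)$ in the hypothesis on $\e_0$ is exactly what keeps the admissible interval for $\delta$ non-empty.
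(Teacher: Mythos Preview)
Your approach is essentially the same as the paper's: both derive a supersolution inequality for $C_0\partial_e u - u$ from convexity of $F$ (the paper uses the linearized operator $a_{ij}\partial_{ij}$ with $a_{ij}\in\partial F(D^2u)$, obtaining $a_{ij}\partial_{ij}\partial_e u\le 0$ and $a_{ij}\partial_{ij}u\ge 1$, exactly your Step~1), add a quadratic barrier centered at the contradiction point, and compare interior to boundary values. The paper fixes $\delta=1/(2n\lambda_1)$ and applies the minimum principle on $\Omega\cap B_1$; your Step~2 is the viscosity-phrased variant on $B_{1/2}(x_0)$.

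One point deserves tightening. Your treatment of the case $y^*\in\partial\Omega$ via ``semicontinuity from inside $\Omega$'' is not justified: the inequality $\PP^-(D^2w)\le -1$ is only established on the open set $\Omega$, and viscosity supersolution properties do not propagate to the closure in general. The paper handles this boundary directly: since $\Omega\supset\{u\neq 0\}$, one has $u=\nabla u=0$ on $B_1\setminus\Omega$ (hence on $\partial\Omega$), so $w(y^*)=0$ and $\psi(y^*)=\delta|y^*-x_0|^2\ge 0$, contradicting $\psi(y^*)<0$. With this replacement your argument goes through; in fact your two-sided difference quotient (yielding $L(\partial_e u)=0$ rather than merely $\le 0$) is a slight sharpening of the paper's one-sided version, though only the inequality is needed.
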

\begin{proof}
Since $F$ is convex, for any matrix $M$ we can choose an element $P^M$ inside $\partial F(M)$
(the subdifferential of  $F$ at $M$)
in such a way that the map $M \mapsto P^M$ is measurable.
Then, since that $u \in C^{2,\alpha}_{\rm loc}(\Omega)$ (by Evans-Krylov's Theorem \cite[Chapter 6]{CC}), we can define the measurable uniformly elliptic coefficients
$$
a_{ij}(x):=(P^{D^2u(x)})_{ij} \in \partial F(D^2u(x)).
$$
We now notice two useful facts:
first of all, since $a_{ij} \in \partial F(D^2u)$,
by convexity of $F$ we deduce that, for any $x \in \Omega$ and $h >0$ small such that $x+he \in \Omega$,
$$
a_{ij}(x) \frac{\partial_{ij}u(x+he)-\partial_{ij}u(x)}{h} \leq \frac{F(D^2u(x+he)) - F(D^2u(x))}{h}=0,
$$
so, by letting $h \to 0$,
\begin{equation}
\label{est:1}
a_{ij}\partial_{ij} \partial_eu \leq 0\qquad \text{in }\Omega.
\end{equation}
Also, again by the convexity of $F$ and recalling that $F(0)=0$, we have
\begin{equation}
\label{est:2}
a_{ij}\partial_{ij}u \geq F(D^2u) - F(0)=1\qquad \text{in }\Omega.
\end{equation}
Now, let us assume by contradiction that there exists $y_0\in B_{1/2}$ such that $C_0\partial_eu(y_0)-u(y_0)<0$, and consider
the function
$$
w(x):=C_0\partial_eu(x)- u(x)+\frac{|x-y_0|^2}{2n\lambda_1}.
$$
Thanks to \eqref{est:1}, \eqref{est:2}, and assumption (H1) (which implies that $\lambda_0 \Id \leq a_{ij}\leq \lambda_1 \Id$) we deduce that
$w$ is a supersolution of the linear operator $\Lin:=a_{ij}\partial_{ij}$.
Hence, by the maximum principle,
$$
\min_{\partial (\Omega \cap B_1)} w =  \min_{\Omega \cap B_1} w \leq w(y_0)<0,
$$
where the first inequality follows from the fact that $y_0 \in \Omega \cap B_{1/2}$ (since $u=\nabla u=0$ outside $\Omega$).

Since $w\geq 0$ on $\partial \Omega$ and $|x-y_0|^2\geq 1/4$ on $\partial B_1$, it follows that
$$
0>\min_{\partial B_1} w \geq -\e_0+ \frac{1}{8n\lambda_1},
$$
a contradiction if $\e_0 < 1/(8n\lambda_1)$.
\end{proof}

\subsection{The case $\Omega\supset \{\nabla u \neq 0\}$}

\begin{lemma}
\label{lemma:monotonicity 2}
Let $u:B_1 \to \R$ be a $W^{2,n}$ solution of \eqref{eq:obstacle}
with $\Omega\supset\{\nabla u \neq 0\}$.
Assume that $C_0\partial_eu-|\nabla u|^2 \geq -\e_0$ in $B_1$ for some $C_0,\e_0 \geq 0$,
and that $F$ is convex, of class $C^1$, and satisfies (H0)-(H1).
Then $C_0\partial_eu-|\nabla u|^2  \geq 0$ in $B_{1/2}$ provided $\e_0 \leq \l_0/(4n^2\lambda_1^3)$.
\end{lemma}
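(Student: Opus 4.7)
My plan is to mirror the proof of Lemma~\ref{lemma:monotonicity 1}, replacing the comparison function $u$ by $|\nabla u|^2$ and using the extra hypothesis $F \in C^1$. As before, I would set $a_{ij}(x) := F_{ij}(D^2 u(x))$, which by (H1) is measurable with eigenvalues in $[\l_0,\l_1]$; by convexity of $F$ one has $a_{ij}(x)\in\p F(D^2u(x))$, so, exactly as in \eqref{est:1}, $\Lin(\p_e u) \le 0$ in $\Omega$, where $\Lin := a_{ij}\p_{ij}$.

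The new ingredient is the calculation of $\Lin(|\nabla u|^2)$. Because $F \in C^1$, interior regularity for $F(D^2 u) = 1$ in $\Omega$ (Evans--Krylov, followed by a standard Schauder bootstrap on the linearized equation for $\p_k u$) gives $u \in C^{3,\a}_{\rm loc}(\Omega)$, so I may differentiate $F(D^2 u)=1$ classically in $x_k$ to obtain the pointwise identity $a_{ij}\,\p_{ijk}u = 0$ in $\Omega$. Expanding $\p_{ij}(|\nabla u|^2) = 2\sum_k\bigl(u_{ki}u_{kj} + u_k\,u_{kij}\bigr)$ and contracting with $a_{ij}$, the third-derivative term is annihilated, so
$$
\Lin(|\nabla u|^2) \;=\; 2\,\tr\!\bigl(a\,(D^2 u)^2\bigr) \;\ge\; 2\l_0\,\tr\!\bigl((D^2 u)^2\bigr) \qquad \text{in }\Omega.
$$
To turn this into a positive constant I combine the convexity estimate $a_{ij}\p_{ij}u \ge F(D^2u) - F(0) = 1$ (as in \eqref{est:2}) with Cauchy--Schwarz in the Hilbert--Schmidt inner product, $1 \le \sqrt{\tr(a^2)}\,\sqrt{\tr((D^2u)^2)} \le \sqrt{n}\,\l_1\,\sqrt{\tr((D^2u)^2)}$, to conclude that $\tr((D^2u)^2)\ge 1/(n\l_1^2)$ and hence
$$
\Lin(|\nabla u|^2) \;\ge\; \frac{2\l_0}{n\l_1^2} \qquad \text{in }\Omega.
$$

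With these two bounds, the remainder of the argument runs in parallel with Lemma~\ref{lemma:monotonicity 1}. Assuming by contradiction that $C_0\p_eu(y_0) - |\nabla u(y_0)|^2 < 0$ at some $y_0 \in B_{1/2}$, the vanishing of $\nabla u$ on $\R^n\setminus\Omega$ forces $y_0 \in \Omega$, and I set
$$
w(x) := C_0\,\p_e u(x) - |\nabla u(x)|^2 + c\,|x-y_0|^2, \qquad c := \frac{\l_0}{n^2\l_1^3}.
$$
Using $\Lin(|x-y_0|^2) = 2\tr(a) \le 2n\l_1$, I obtain $\Lin w \le 0$ in $\Omega\cap B_1$, so the minimum principle places $\min w$ on $\p(\Omega\cap B_1)$. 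On $\p\Omega$, $\nabla u=0$ gives $w \ge 0$; on $\p B_1\cap\ov\Omega$, $|x-y_0|\ge 1/2$ gives $w \ge -\e_0+c/4 \ge 0$ exactly when $\e_0 \le \l_0/(4n^2\l_1^3)$, contradicting $w(y_0)<0$.

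The step that I expect to demand the most care is the pointwise identity $a_{ij}\,\p_{ijk}u = 0$: it relies both on the $C^1$ hypothesis on $F$ (so that $a_{ij}$ is a genuine gradient and the chain rule applies to $F(D^2u)$) and on classical interior $C^3$ regularity of $u$ inside $\Omega$. This is the reason the stronger hypothesis $F\in C^1$ appears here, as opposed to mere convexity in Lemma~\ref{lemma:monotonicity 1}.
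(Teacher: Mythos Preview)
Your argument is correct and follows essentially the same route as the paper: the same linearized operator, the same computation $\Lin(|\nabla u|^2)=2\tr\bigl(a(D^2u)^2\bigr)\ge 2\l_0|D^2u|^2$, the same lower bound $|D^2u|^2\ge 1/(n\l_1^2)$ (the paper reaches it via the Lipschitz bound $F(D^2u)-F(0)\le\sqrt{n}\,\l_1|D^2u|$, which is equivalent to your Cauchy--Schwarz step), and the identical barrier $w$ with $c=\l_0/(n^2\l_1^3)$.

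One technical point to tighten: with only $F\in C^1$, the coefficients $a_{ij}=F_{ij}(D^2u)$ are merely continuous after Evans--Krylov, not H\"older, so a Schauder bootstrap does not yield $u\in C^{3,\alpha}_{\rm loc}(\Omega)$. The paper handles this by invoking $L^p$ theory for the linearized equation to get $\nabla u\in W^{2,p}_{\rm loc}(\Omega)$ for all $p<\infty$; this is enough for $a_{ij}\partial_{ijk}u=0$ to hold a.e.\ and for the pointwise computation of $\Lin(|\nabla u|^2)$, which is all your argument actually needs.
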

\begin{proof}
By differentiating the equation $F(D^2u)=1$ inside $\Omega$, we deduce that 
\begin{equation}
\label{est:1bis}
F_{ij}(D^2u)\partial_{ij}\nabla u =0.
\end{equation}
We now observe that, since $F_{ij} \in C^0$ (because $F \in C^1$) and $D^2u \in C^{2,\alpha}_{\rm loc}(\Omega)$
(by Evans-Krylov's Theorem \cite[Chapter 6]{CC}), $\nabla u$ solves
a linear elliptic equation with continuous coefficients, so by standard elliptic theory $\nabla u \in W^{2,p}_{\rm loc}(\Omega)$ for any $p<\infty$.
Hence, we can apply the linear operator $F_{ij}(D^2u)\partial_{ij}$ to the $W^{2,p}_{\rm loc}$ function $|\nabla u|^2$, and using \eqref{est:1bis} we obtain 
\begin{align*}
F_{ij}(D^2u)\partial_{ij} |\nabla u|^2& = 2\left(F_{ij}(D^2u)\partial_{ij}\partial_k u\right) \cdot \partial_k u + 2F_{ij}(D^2u) \partial_{ij}u\partial_{ik}u\\
&=2F_{ij}(D^2u) \partial_{ij}u\partial_{ik}u.
\end{align*}
Now, if for every point $x \in \Omega$ we choose a system of coordinates so that $D^2u$ is diagonal, since $F_{ii}(D^2u)\geq \lambda_0$ for all $i=1,\ldots,n$ (by (H1)) we obtain
$$
F_{ij}(D^2u(x))\partial_{ij} |\nabla u|^2(x) = 2F_{ii}(D^2u(x))\left(D_{ii}u(x)\right)^2 \geq 2\lambda_0 |D^2u(x)|^2.
$$
Using (H1) again, we also have
$$
1 = F(D^2 u) - F(0) \leq \sqrt{n}\l_1|D^2u|\qquad \text{inside $\Omega$},
$$
so by combining the two estimates above we condude that
\begin{equation}
\label{est:2bis}
F_{ij}(D^2u))\partial_{ij} |\nabla u|^2 \geq 2\lambda_0/(n\lambda_1^2).
\end{equation}
Thanks to \eqref{est:1bis} and \eqref{est:2bis}, we conclude exactly as before considering now the function
$$
w(x):=C_0\partial_eu(x)- |\nabla u|^2 (x)+\frac{\l_0|x-y_0|^2}{n^2\lambda_1^3}.
$$
\end{proof}


\section{Proof of Theorem~\ref{thm:C12}}\label{Local-regularity}
As already mentioned in the introduction,
once we know that blow-up solutions around ``thick points''
are half-space solutions (Proposition~\ref{global-convexity})
and we can improve almost directional monotonicity to full directional monotonicity
(Lemmas ~\ref{lemma:monotonicity 1} and~\ref{lemma:monotonicity 2}), then the proof of Theorem~\ref{thm:C12} becomes standard.
For convenience of the reader, we briefly sketch it here.

We consider only the case when $\Omega \supset \{u \neq 0\}$ (the other being analogous).

Take $x \in \partial \Omega \cap B_{1/8}$,
and rescale the solution around $x$, that is, consider $u_r(y):=[u(x+ry)-u(x)]/r^2$.
Because of the uniform $C^{1,1}$ estimate provided by Theorem~\ref{thm:C11},
we can find a sequence $r_j\to 0$
such that $u_{r_j}$ converges  locally in $C^1$ to a 
global solution $u_0$ satisfying 
$u_0(0)= 0$.
Moreover, by our thickness assumption on the free boundary of $u$ and  \eqref{stability2}, it follows that
the minimal diameter property holds for all $r>0$ and all points on the free boundary $\partial \Omega (u_0)$. Then, by Proposition~\ref{global-convexity} we deduce that $u_0$ is of the form
$u_0=\g[(x\cdot e_x)_+]^2/2 $ with $\gamma \in [1/\l_1,1/\l_0]$ and $e_x \in \mathbb S^{n-1}$.

Notice now that, for any $s>0$, we can find a large constant $C_s$ such that
$$
C_s\p_e u_ 0- u_0 \geq 0,\qquad C_s\p_e u_ 0- |\nabla u_0|^2 \geq 0\qquad \text{inside $B_1$}
$$
for all directions $e \in\mathbb S^{n-1}$ such that $e\cdot e_x \geq s$.
Since $u_{r_j} \to u_0$ in $C^1_{\rm loc}$, we deduce 
that, for $j$ sufficiently large (the largeness depending on $s$),
the assumptions of Lemma~\ref{lemma:monotonicity 1} are satisfied with $u=\tilde u_{r_j}$.
Hence
\begin{equation}
\label{eq:monot}
C_s\partial_e  u_{r_j}-  u_{r_j} \geq 0\qquad \text{in $B_{1/2}$},
\end{equation}
and since $u_{r_j}(0)=0$ a simple ODE argument 
shows that $u_{r_j} \geq 0$ in $B_{1/4}$  (see the proof of Lemmas 4.4-4.5 in \cite{PSU}).

Using \eqref{eq:monot} again, this implies that 
$\partial_e  u_{r_j}$ inside $B_{1/4}$,
and so in terms of $u$ we deduce that there exists $r=r(s)>0$ such that
$$
\partial_e u \geq 0 \qquad \text{inside $B_{r}(x)$}
$$
for all $e \in \mathbb S^{n-1}$ such that $e \cdot e_x \geq s$.

A simple compactness argument shows that $r$ is independent of the point $x$,
which implies that the free boundary is $s$-Lipschitz.
Since $s$ can be taken arbitrarily small (provided one reduces the size of $r$), this actually
proves that the free boundary is $C^1$ (compare for instance with \cite[Theorem 4.10]{PSU}).
Higher regularity follows from the classical work of Kinderlehrer-Nirenberg \cite{KN}.


\end{document}